\newtheorem{theorem}{Theorem}[section]
\newtheorem{lemma}[theorem]{Lemma}
\newtheorem{corollary}[theorem]{Corollary}
\newtheorem{remark}[theorem]{Remark}
\newtheorem{proposition}[theorem]{Proposition}
\theoremstyle{definition}
\newtheorem{definition}[theorem]{Definition}
\newcommand{\sgn}[0]{\,\mathrm{sgn}}
\newcommand{\dd}{\,\mathrm{d}}
\newcommand{\DD}{\mathrm{D}}
\newcommand{\R}{\mathbb{R}}
\newcommand{\F}{\mathcal{F}}
\newcommand{\N}{\mathbb{N}}
\newcommand{\Z}{\mathbb{Z}}
\renewcommand{\L}{\mathcal{L}}
\newcommand{\1}{\mathbf{1}}
\renewcommand{\P}{\mathbb{P}}
\newcommand{\supp}{\mathrm{supp}}
\newcommand{\var}{\mathrm{var}}
\newcommand{\V}{\mathcal{V}}
\newcommand{\B}{\mathcal{B}}
\mathchardef\mhyphen="2D
\title{Local times for typical price paths and\\ pathwise Tanaka formulas 
\thanks{We are grateful to Peter Imkeller and Johannes Ruf for helpful discussions on the subject matter. We would like to thank Alexander Cox for pointing out a small mistake in a previous version of the paper.}}
\author{Nicolas Perkowski\thanks{N.P. is supported by the Fondation Sciences Math\'ematiques de Paris  (FSMP) and by a public grant overseen by the French National Research Agency (ANR) as part of the ``Investissements d'Avenir'' program (reference: ANR-10-LABX-0098).} \\ 
CEREMADE \& CNRS UMR 7534 \\ 
Universit\'e Paris-Dauphine \\
\texttt{perkowski@ceremade.dauphine.fr}
\and David J. Pr\"omel\thanks{D.J.P. is supported by a Ph.D. scholarship of the DFG Research Training Group 1845 "Stochastic Analysis with Applications in Biology, Finance and Physics".} \\
Humboldt-Universit\"at zu Berlin \\
Institut f\"ur Mathematik \\
\texttt{proemel@math.hu-berlin.de}}
\begin{document}

\maketitle

\begin{abstract}
  Following a hedging based approach to model free financial mathematics, we prove that it should be possible to make an arbitrarily large profit by investing in those one-dimensional paths which do not possess local times. The local time is constructed from discrete approximations, and it is shown that it is $\alpha$-H\"older continuous for all $\alpha<1/2$. Additionally, we provide various generalizations of F\"ollmer's pathwise It\^o formula.
\end{abstract}

\noindent\textbf{Key words:} It\^o formula, Local times, Model uncertainty, Tanaka formula. \\
\textbf{MSC 2010 Classification:} Primary: 60H05, 60J60. Secondary: 91G99.

%

\section{Introduction}

This paper uses Vovk's~\cite{Vovk2012} game-theoretic approach to mathematical finance to construct local times for ``typical price paths''.  Vovk's approach is based on an outer measure, which is given by the cheapest pathwise superhedging price, and it does not presume any probabilistic structure. We define discrete versions of the local time and prove that outside a set of outer measure zero they converge to a continuous limit. Roughly speaking, this means that it should be possible to make an arbitrarily large profit by investing in those paths where the convergence of the discrete local times fails. A nice consequence is that the convergence takes place quasi surely under all semimartingale measures for which the coordinate process satisfies the classical condition of "no arbitrage opportunities of the first kind", i.e. for which the drift has a square integrable density with respect to the quadratic variation of the local martingale part.

Using these pathwise local times, we derive various pathwise change of variable formulas which generalize F\"ollmer's pathwise It\^o formula~\cite{Follmer1981} in the same way that the classical Tanaka formula generalizes the classical It\^o formula. In particular, we can integrate $f(S)$ against a typical price path $S$ whenever $f$ has finite $q$-variation for some $q<2$.

This work is a continuation of~\cite{Perkowski2013}, where we used Vovk's approach to show that in a multidimensional setting every typical price path has a natural It\^o rough path in the sense of Lyons~\cite{Lyons1998} associated to it. Based on this, we set up a pathwise theory of integration which was motivated by possible applications in model free financial mathematics. 
With the techniques of~\cite{Perkowski2013} we are able to treat integrands that are not necessarily functions of the integrator. But if we want to construct $\int f(S) \dd S$, then we need $f \in C^{1+\varepsilon}$. The aim of the current paper is to show that for one-dimensional price processes this assumption can be greatly relaxed.

Our motivation comes amongst others from~\cite{Davis2014}, where pathwise local times and a pathwise generalized It\^o formula are used to derive arbitrage free prices for weighted variance swaps in a model free setting. The techniques of~\cite{Davis2014} allow to handle integrands in the Sobolev space $H^1$. Here we extend this to not necessarily continuous integrands of finite $q$-variation for some $q<2$. Further motivations can be found in the survey paper~\cite{Follmer2013} which emphasizes possible applications of pathwise integration to robust hedging problems, or in~\cite{Carr1990} and \cite{Sondermann2006}, where local times appear naturally in a financial context and are used to resolve the so-called ``stop-loss start-gain paradox''.

We refer to~\cite{Perkowski2013} for a more detailed discussion of the need for pathwise stochastic integrals in model free finance.

\subsubsection*{Plan of the paper}

In Section~\ref{sec:tanaka} we present various extensions of F\"ollmer's pathwise It\^o formula under suitable assumptions on the local time. In Section~\ref{sec:localtime} we show that typical price paths possess local times which satisfy all the assumptions of Section~\ref{sec:tanaka}.

\section{Pathwise Tanaka formula}\label{sec:tanaka}

A first non-probabilistic approach to stochastic calculus was introduced by F\"ollmer in \cite{Follmer1981}, where an It\^o formula was developed for a class of real-valued functions 
with quadratic variation. This builds our starting point for a pathwise version of Tanaka's formula and a generalized  It\^o formula, respectively. Let us start by recalling F\"ollmer's definition of quadratic variation. \\
A \textit{partition} $\pi$ is an increasing sequence $0 = t_0 < t_1 < \ldots$ without accumulation points, possibly taking the value $\infty$. For $T>0$ we denote by $\pi[0,T]:= \{ t_j \, : \, t_j \in [0,T) \}\cup \{T\}$ the partition $\pi$ restricted to $[0,T]$, and if $S\colon [0,\infty)\to \R$ is a continuous function we write
\begin{equation*}
   m(S,\pi[0,T]):=\max_{ t_j \in \pi[0,T]\setminus \{t_0 \}} \vert S(t_j) - S(t_{j-1})\vert
\end{equation*}
for the mesh size of $\pi$ along $S$ on the interval $[0,T]$. We denote by $\mathcal{B}([0,\infty))$ the Borel $\sigma$-algebra on $[0,\infty)$. 

\begin{definition}\label{def:qudratic}
  Let $(\pi^n)$ be a sequence of partitions and let $S \in C([0,\infty), \R)$ be such that $\lim_{n \to \infty} m(S,\pi^n[0,T]) =0$ for all $T > 0$. We say that $S$ has \textit{quadratic variation} along $(\pi^n)$ if the sequence of measures
  \begin{equation*}
    \mu_n := \sum_{t_j \in \pi^n \setminus  \{\infty \}} ( S(t_{j+1}) - S(t_j))^2 \delta_{t_j}, \qquad n \in \N,
  \end{equation*}
  on $([0,\infty), \B([0,\infty)))$ converges vaguely to a nonnegative Radon measure $\mu$ without atoms, where $\delta_t$ denotes the Dirac measure at $t \in [0, \infty)$. We write $\langle S \rangle(t) := \mu([0,t])$ for the continuous ``distribution function'' of $\mu$ and $\mathcal{Q}(\pi^n)$ for the set of all continuous functions having quadratic variation along $(\pi^n)$. 
\end{definition}

The reason for only requiring $\lim_n m(S,\pi^n[0,T]) =0$ rather than assuming that the mesh size of $(\pi^n)$ goes to zero is that later we will work with Lebesgue partitions and paths with piecewise constant parts, in which case only the first assumption holds.

We stress the fact that $\mathcal{Q}(\pi^n)$ depends on the sequence $(\pi^n)$ and that for a given path the quadratic variation along two different sequences of partitions can be different, even if both exist. This is very unpleasant and might lead the reader to question the usefulness of our results. But quite remarkably there is a large class of paths which have a natural pathwise quadratic variation that is independent of the specific partition used to calculate it. More precisely, in the master's thesis~\cite{Lemieux1983}, see also~\cite{Chacon1981}, the notion of \emph{quadratic arc length} is introduced. Roughly speaking, a path $S$ has quadratic arc length $A$ if the quadratic variation of $S$ along \emph{any sequence of Lebesgue partitions} is equal to $A$. It is shown in~\cite{Lemieux1983}, Theorem III.3.3, that almost every sample path $S(\omega)$ of a continuous semimartingale has a quadratic arc length which is equal to the semimartingale quadratic variation $\langle S \rangle(\omega)$. The same theorem also shows that almost every sample path of a continuous semimartingale has a natural local time which can be obtained by counting interval upcrossings.

For $k \in \N$ let us write $C^k = C^k(\R,\R)$ for the space of $k$ times continuously differentiable functions, and $C^k_b = C^k_b(\R,\R)$ for the space of functions in $C^k$ that are bounded with bounded derivatives, equipped with the usual norm $\|\cdot\|_{C^k_b}$.

\begin{theorem}[\cite{Follmer1981}]\label{thm:ito}
  Let $(\pi^n)$ be a sequence of partitions and let $S \in \mathcal{Q}(\pi^n)$ and $f \in C^2$. Then the pathwise It\^o formula
  \begin{equation*}
    f(S(t)) = f(S(0)) + \int_0^t f^\prime(S(s)) \dd S(s) + \frac{1}{2} \int_0^t f^{\prime \prime} (S(s)) \dd \langle S \rangle(s)
  \end{equation*}
  holds with 
  \begin{equation}\label{eq:int}
    \int_0^t f^\prime (S(s)) \dd S(s) := \lim_{n \to \infty} \sum_{t_j \in \pi^n} f^\prime (S(t_j)) (S(t_{j+1}\wedge t) - S(t_j \wedge t)), \quad t \in [0,\infty),
  \end{equation}
  where the series in \eqref{eq:int} is absolutely convergent. 
  
  In particular, the integral $\int_0^\cdot g(S(s)) \dd S(s)$ is defined for all $g \in C^1$, and for all $T>0$ the map $C^1_b \ni g \mapsto \int_0^\cdot g(S(s)) \dd S(s) \in C([0,T],\R)$ defines a bounded linear operator and we have
  \begin{equation*}
    \Big| \int_0^t g(S(s))\dd S(s) \Big| \le |S(t) - S(0)| \times \|g\|_{L^\infty(\supp(S|_{[0,t]}))} + \frac{1}{2} \langle S\rangle(t) \| g'\|_{L^\infty(\supp(S|_{[0,t]}))}
  \end{equation*}
  for all $t \ge 0$, where $\supp(S|_{[0,t]}])$ denotes the support of $S$ restricted to the interval $[0,t]$.
\end{theorem}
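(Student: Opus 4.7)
The plan is to follow Föllmer's original strategy: fix $T > 0$, Taylor-expand $f$ to second order between consecutive partition points, sum the identities, observe that the left-hand side telescopes, identify the quadratic sum via vague convergence of $\mu_n \to \mu$, and show that the Taylor remainder is asymptotically negligible. The existence of the first-order sum should then drop out automatically, so (\ref{eq:int}) and the Itô formula are established simultaneously. Since $S$ is continuous, $K := S([0,T])$ will be compact and $f'' \in C(\R)$ uniformly continuous on $K$; I would denote its modulus of continuity there by $\omega$.

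For the first main step I would write $t_{j+1}^T := t_{j+1} \wedge T$ and use Taylor's theorem to obtain, for each $t_j \in \pi^n \cap [0,T)$,
\begin{equation*}
  f(S(t_{j+1}^T)) - f(S(t_j)) = f'(S(t_j))(S(t_{j+1}^T) - S(t_j)) + \tfrac{1}{2} f''(S(t_j))(S(t_{j+1}^T) - S(t_j))^2 + R_j^n,
\end{equation*}
with $|R_j^n| \le \tfrac{1}{2}\omega(m(S,\pi^n[0,T]))(S(t_{j+1}^T) - S(t_j))^2$. Summing over such $t_j$, the left-hand side telescopes to $f(S(T)) - f(S(0))$. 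The quadratic sum agrees with $\int_{[0,T)} f''(S(s)) \dd \mu_n(s)$ up to a single boundary term at the last sub-$T$ partition point, which vanishes using $m(S,\pi^n[0,T']) \to 0$ for any $T' > T$; by vague convergence $\mu_n \to \mu$, continuity of $s \mapsto f''(S(s))$, and $\mu(\{T\}) = 0$, it then converges to $\int_0^T f''(S(s)) \dd \langle S \rangle(s)$. For the remainder, the bound $\sum_j |R_j^n| \le \tfrac{1}{2}\omega(m(S,\pi^n[0,T])) \cdot \mu_n([0,T]) \to 0$ suffices since $\mu_n([0,T])$ stays bounded. This forces the first-order sum to converge; taking its limit as the definition of $\int_0^T f'(S(s)) \dd S(s)$ yields the Itô formula at $t = T$, and hence for every $t \ge 0$. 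Absolute convergence of the series in (\ref{eq:int}) at any finite $t$ is automatic since only finitely many indices $t_j < t$ contribute.

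For the operator-bound part, given $g \in C^1$ I would pick an antiderivative $F \in C^2$ of $g$; then the Itô formula applied to $F$ and rearranged reads
\begin{equation*}
  \int_0^t g(S(s)) \dd S(s) = F(S(t)) - F(S(0)) - \tfrac{1}{2} \int_0^t g'(S(s)) \dd \langle S \rangle(s).
\end{equation*}
The fundamental theorem of calculus gives $|F(S(t)) - F(S(0))| = \bigl|\int_{S(0)}^{S(t)} g(x) \dd x\bigr| \le \|g\|_{L^\infty(\supp(S|_{[0,t]}))}\,|S(t) - S(0)|$, while the second summand is controlled by $\tfrac{1}{2}\|g'\|_{L^\infty(\supp(S|_{[0,t]}))}\langle S \rangle(t)$, producing the stated estimate. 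Linearity in $g$ follows at once from the defining limit, and $t \mapsto \int_0^t g(S) \dd S$ is continuous because $\langle S \rangle$ is continuous (the measure $\mu$ being atomless), so $g \mapsto \int_0^\cdot g(S) \dd S$ is a bounded linear operator $C^1_b \to C([0,T], \R)$.

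The main obstacle will be the boundary treatment at $T$ in the vague-convergence step, where one must rule out limit mass accumulating at $T$; however this is precisely what the atomlessness of $\mu$ built into Definition~\ref{def:qudratic} guarantees, so no actual obstruction arises and the proof proceeds as sketched.
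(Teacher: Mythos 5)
Your proof is correct and is essentially F\"ollmer's original argument, which is precisely what the paper relies on here: Theorem~\ref{thm:ito} is stated by citation to \cite{Follmer1981}, with only the remark that for continuous $S$ the hypothesis $m(S,\pi^n[0,T])\to 0$ suffices in place of a vanishing mesh. The one step worth tightening is the boundary term $(S(t_{j_0+1})-S(t_{j_0}))^2$ at the last partition point $t_{j_0}<T$: if $t_{j_0+1}$ lies beyond every fixed $T'>T$, the quantity $m(S,\pi^n[0,T'])$ does not directly control this increment, but the term still vanishes because the atoms of $\mu_n$ on $[0,T]$ tend to zero uniformly (vague convergence to the atomless $\mu$ forces $\mu_n([0,\cdot])$ to converge locally uniformly to the continuous $\langle S\rangle$, so $\sup_{s\le T}\mu_n(\{s\})\to 0$).
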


F\"ollmer actually requires the mesh size $\max_{ t_j \in \pi^n\setminus \{t_0\},\, t_j \le T} \vert t_j - t_{j-1}\vert$ to converge to zero for all $T >0$, but he also considers c\`adl\`ag functions $S$. For continuous $S$, the proof only uses that $m(S,\pi^n[0,T])$ converges to zero.

The continuity of the It\^o integral is among its most important properties: if we approximate the integrand in a suitable topology, then the approximate integrals converge in probability to the correct limit. This is absolutely crucial in applications, for example when solving stochastic optimization problems or SDEs. Here we are arguing for one fixed path, so the statement in Theorem~\ref{thm:ito} is a natural formulation of the continuity properties in our context.

In the theory of continuous semimartingales, It\^o's formula can be extended further to a generalized It\^o rule for convex functions, see for instance Theorem 6.22 in \cite{Karatzas1988}. In the spirit of F\"ollmer, a generalized It\^o rule for functions in suitable Sobolev spaces was derived in the unpublished diploma thesis of Wuermli~\cite{Wuermli1980}. We briefly recall here the idea for this pathwise version as presented in~\cite{Wuermli1980} or~\cite{Davis2014}.

Let $f^\prime$ be right-continuous and of locally bounded variation, and we set $f(x) := \int_{(0,x]} f^\prime(y) \dd y$ for $x \ge 0$ and $f(x) := - \int_{(x,0]} f^\prime(y) \dd y$ for $x < 0$. Then we get for $b\ge a$ that
\[
   f(b) - f(a) = f^\prime(a)(b-a) + \int_{(a,b]} (f^\prime(x) - f^\prime(a)) \dd x = f^\prime(a)(b-a) + \int_{(a,b]} (b-t) \dd f^\prime(t),
\]
where we used integration by parts, and where the integral on the right hand side is to be understood in the Riemann-Stieltjes sense. For $b<a$, we get $f(b) - f(a) = f^\prime(a)(b-a) + \int_{(b,a]} (t-b) \dd f^\prime(t)$. Therefore, for any $S \in C([0,\infty], \mathbb{R})$ and any partition $\pi$ we have
\begin{align} \label{eq:discrete local time ito} \nonumber
  f(S( t)) - f(S(0)) &= \sum_{t_j \in \pi} f^\prime (S(t_j \wedge t )) (S(t_{j+1} \wedge t) - S(t_j \wedge t)) \\
         &\quad + \int_{-\infty}^\infty \Big( \sum_{t_j \in \pi}  \1_{\llparenthesis S(t_j\wedge t), S(t_{j+1}\wedge t) \rrbracket} (u) \vert S(t_{j+1}\wedge t) - u \vert \Big) \dd f^{\prime}(u),
\end{align}
where we used the notation 
\begin{align*}
  \llparenthesis u,v \rrbracket := \begin{cases}
                                     (u,v], & \text{if } u \leq v,\\
                                     (v,u], & \text{if } u > v, 
                                   \end{cases}
\end{align*}
for $u,v \in \mathbb{R}$.
Let us define a discrete local time by setting
\begin{equation*}
  L^{\pi}_t( S, u ) := \sum_{t_j \in \pi} \1_{\llparenthesis S(t_j \wedge t), S(t_{j+1} \wedge t) \rrbracket} (u) \vert S(t_{j+1} \wedge t) - u \vert, \quad u \in \mathbb{R},
\end{equation*}
and note that $L_t^{\pi}(S,u)= 0$ for $u \notin [\inf_{s\in[0,t]} S(s) ,  \sup_{s\in[0,t]} S(s) ]$. In the following we may omit the $S$ and just write $L^{\pi}_t(u)$.

\begin{definition}\label{def:local}
  Let $(\pi^n)$ be a sequence of partitions and let $S \in C([0,\infty),\mathbb{R})$. A function $L(S) \colon [0,\infty) \times \mathbb{R} \to \mathbb{R}$ is called $L^2$-\textit{local time} of $S$ along $(\pi^n)$ if for all $t \in [0,\infty)$ it holds $\lim_{n \to \infty} m(S,\pi^n[0,t]) =0$ and the discrete pathwise local times $L^{\pi^n}_t(S, \cdot)$ converge weakly in $L^2(\mathrm{d} u)$ to $L_t(S, \cdot)$ as $n \to \infty$. We write $\L_{L^2}(\pi^n)$ for the set of all continuous functions having an $L^2$-local time along $(\pi^n)$.
\end{definition}

Using this definition of the local time, Wuermli showed the following theorem, where we denote by $H^k = H^k(\R,\R)$ the Sobolev space of functions which are $k$ times weakly differentiable in $L^2(\R,\R)$.

\begin{theorem}[\cite{Wuermli1980}, Satz~9 or~\cite{Davis2014}, Proposition~B.4]\label{thm:odf}
  Let $(\pi^n)$ be a sequence of partitions and let $S \in \L_{L^2}(\pi^n)$. Then $S \in \mathcal{Q}(\pi^n)$, and for every $f \in H^2$ the generalized pathwise It\^o formula
  \begin{equation*}
    f(S(t)) = f(S(0)) + \int_0^t f^\prime(S(s)) \dd S(s) + \int_{-\infty}^\infty f^{\prime \prime} (u) L_t(S,u)\dd u
  \end{equation*}
  holds with 
  \begin{equation*}
    \int_0^t f^\prime (S(s)) \dd S(s) := \lim_{n \to \infty} \sum_{t_j \in \pi^n} f^\prime (S(t_j)) (S(t_{j+1} \wedge t) - S(t_j\wedge t)), \quad t \in [0,\infty).
  \end{equation*}
  (Note that $f^\prime$ is continuous for $f \in H^2$). In particular, the integral $\int_0^\cdot g(S(s)) \dd S(s)$ is defined for all $g \in H^1$, and for all $T>0$, the map $H^1 \ni g \mapsto \int_0^\cdot g(S(s)) \dd S(s) \in C([0,T],\R)$ defines a bounded linear operator.
  Moreover, for $A \in \mathcal{B}(\mathbb{R})$ we have the occupation density formula
  \begin{equation*}
    \int_A L_t(u) \dd u = \frac{1}{2} \int_0^t \1_A(S(s)) \dd \langle S \rangle(s), \quad t \in [0,\infty).
  \end{equation*}
  In other words, for all $t\ge0$ the occupation measure of $S$ on $[0,t]$ is absolutely continuous with respect to the Lebesgue measure, with density $2 L_t$.
\end{theorem}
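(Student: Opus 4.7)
The plan is to pass to the limit $n \to \infty$ in the exact algebraic identity \eqref{eq:discrete local time ito}, which already encodes a discrete Tanaka formula. For $f \in H^2$, Sobolev embedding yields $f' \in C^0$, and $f'$ is absolutely continuous with $f'' \in L^2$, so the Stieltjes measure $\dd f'$ equals $f''(u) \dd u$. Hence the last term in \eqref{eq:discrete local time ito} reads $\int L^{\pi^n}_t(u) f''(u) \dd u$, which by the weak $L^2$ convergence of $L^{\pi^n}_t$ tends to $\int L_t(u) f''(u) \dd u$. Since the left-hand side of \eqref{eq:discrete local time ito} does not depend on $n$, the Riemann sum converges as well, and the common limit must be, by definition, $\int_0^t f'(S(s)) \dd S(s)$. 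This yields the generalized It\^o formula at once.

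To show $S \in \mathcal{Q}(\pi^n)$, I would exploit the identity $2 \int L^{\pi^n}_t(u) \dd u = \sum_{t_j \in \pi^n} (S(t_{j+1}\wedge t) - S(t_j \wedge t))^2$, obtained by directly integrating the definition of $L^{\pi^n}_t$ term by term. Since every $L^{\pi^n}_t$ is supported in the bounded set $[\inf_{[0,t]} S, \sup_{[0,t]} S]$, testing the weak $L^2$ convergence against the indicator of a slightly larger interval gives $\sum_j (S(t_{j+1}\wedge t) - S(t_j \wedge t))^2 \to F(t) := 2 \int L_t(u) \dd u$ for every $t$, i.e.\ pointwise convergence of the distribution functions of $\mu_n$ (up to a boundary term vanishing with $m(S,\pi^n[0,t])$). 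The monotone finite function $F$ determines a Radon measure $\mu$ on $[0,\infty)$, and both vague convergence of $\mu_n$ and the absence of atoms of $\mu$ reduce to continuity of $F$. I would obtain this continuity by applying the formula of the first paragraph to $f(x) = x^2 \chi(x)$ with a smooth cutoff $\chi$ equal to $1$ on the range of $S|_{[0,T]}$: this yields $F(t) = S(t)^2 - S(0)^2 - 2 \int_0^t S(s) \dd S(s)$, and $t$-continuity of the right-hand side follows from a direct telescoping estimate using only continuity of $S$ and $m(S,\pi^n[0,T]) \to 0$. Handling this continuity cleanly is the main obstacle of the proof.

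Finally, boundedness of the map $g \mapsto \int_0^\cdot g(S(s)) \dd S(s)$ from $H^1$ into $C([0,T],\R)$ is read off from the representation $\int_0^t g(S(s)) \dd S(s) = f(S(t)) - f(S(0)) - \int L_t(u) g'(u) \dd u$ with $f' = g$ (cut off outside the range of $S|_{[0,T]}$ so that $f$ is bounded), together with the uniform $L^2$ bound on $L_t$ furnished by weak convergence. For the occupation density formula, I would approximate $\1_A$ in $L^1$ by smooth compactly supported $g$ with $\int g = 0$, take $f$ to be a second primitive of $g$ so that $f \in C^2 \cap H^2$, and compare Theorem~\ref{thm:ito} with the It\^o formula just proved: both apply to this $f$ and subtracting gives $\int g(u) L_t(u) \dd u = \tfrac{1}{2}\int_0^t g(S(s)) \dd \langle S\rangle(s)$. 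A monotone class argument extends this identity to indicators $g = \1_A$, while the zero-mean constraint is removed using the case $g \equiv 1$, which is precisely the identity $\int L_t(u) \dd u = \tfrac{1}{2}\langle S\rangle(t)$ already established in the second paragraph.
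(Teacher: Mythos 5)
Your treatment of the change-of-variable formula and of the occupation density formula follows the same route as the paper's (sketched) proof: pass to the limit in \eqref{eq:discrete local time ito} using $\dd f'=f''\dd u$ and the weak $L^2$ convergence, then compare with Theorem~\ref{thm:ito} for smooth $f$ to get the occupation density formula. The identity $2\int L^{\pi^n}_t(u)\dd u=\sum_j(S(t_{j+1}\wedge t)-S(t_j\wedge t))^2$ is correct and is indeed the right bridge to $S\in\mathcal Q(\pi^n)$. The genuine gap is your argument for the continuity of $F(t):=2\int L_t(u)\dd u$, i.e.\ for the atomlessness of $\mu$. It is circular: by pure algebra the Riemann sums for $\int_0^t S\dd S$ satisfy $\sum_j S(t_j)(S(t_{j+1}\wedge t)-S(t_j\wedge t))=\tfrac12\bigl(S(t)^2-S(0)^2-Q_n(t)\bigr)$ with $Q_n(t)=\sum_j(S(t_{j+1}\wedge t)-S(t_j\wedge t))^2$, so the representation $F(t)=S(t)^2-S(0)^2-2\int_0^t S\dd S$ is a tautology and ``continuity of the right-hand side'' is verbatim the statement to be proved. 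No ``telescoping estimate using only continuity of $S$ and $m(S,\pi^n[0,T])\to0$'' can deliver it, since those two hypotheses alone do not prevent the limit quadratic variation from having atoms; the local time hypothesis must enter.

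A working argument uses two facts you have not extracted from the weak convergence. First, at the discrete level $0\le L^{\pi^n}_s(u)\le L^{\pi^n}_t(u)+O(m(S,\pi^n[0,t]))\1_{J_n}(u)$ for $s\le t$ with $|J_n|\le m(S,\pi^n[0,t])$, so testing against nonnegative $L^2$ functions gives $0\le L_s\le L_t$ a.e.; in particular $\sup_{t\le T}\|L_t\|_{L^2}\le\|L_T\|_{L^2}<\infty$ (this also repairs your appeal to a ``uniform $L^2$ bound furnished by weak convergence'' in the boundedness argument, which for fixed $t$ only gives a $t$-dependent bound). Second, $L_t-L_s$ is supported, up to sets of measure $O(m(S,\pi^n))$, in a neighbourhood of $S([s,t])$. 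If $F$ jumped by $c>0$ at $t_0$, then for $\varepsilon\downarrow0$ one would have $c\le 2\int_{I_\varepsilon}(L_{t_0+\varepsilon}-L_{t_0-\varepsilon})\dd u\le 2\|L_T\|_{L^2}\,|I_\varepsilon|^{1/2}$ with $|I_\varepsilon|\to0$ by continuity of $S$, a contradiction; this is essentially the argument in \cite{Wuermli1980} and \cite{Davis2014}, Proposition~B.4, to which the paper defers. The remaining steps of your proposal (vague convergence from pointwise convergence of distribution functions to a continuous limit, the $H^1$ continuity via Cauchy--Schwarz, the monotone class extension for the occupation density formula) are fine.
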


\begin{proof}[Sketch of proof]
   Formula~\eqref{eq:discrete local time ito} in combination with the continuity of $f$ and $S$ yields
   \begin{align*}
      f(S( t)) - f(S(0)) &  = \sum_{t_j \in \pi^n} f^\prime (S(t_j)) (S(t_{j+1} \wedge t) - S(t_j \wedge t)) \\
      &\hspace{45pt} + \int_{-\infty}^\infty \Big( \sum_{t_j \in \pi^n} \1_{\llparenthesis S(t_j\wedge t), S(t_{j+1}\wedge t) \rrbracket} (u) \vert S(t_{j+1}\wedge t) - u \vert\Big) f^{\prime\prime}(u) \dd u.
   \end{align*}
   By assumption, the second term on the right hand side converges to $\int_{-\infty}^\infty f^{\prime \prime} (u) L_t(S,u)\dd u$ as $n$ tends to $\infty$, so that also the Riemann sums have to converge.
   
   The occupation density formula follows by approximating $\mathbf{1}_A$ with continuous functions.
\end{proof}

As already observed by Bertoin~\cite{Bertoin1987}, the key point of this extension of F\"ollmer's pathwise stochastic integral is again that it is given by a \emph{continuous} linear operator on $H^1$. Since $L_t(S,\cdot)$ is compactly supported for all $t \ge 0$, the same arguments also work for functions $f$ that are locally in $H^2$, i.e. such that $f|_{(a,b)} \in H^2((a,b),\R)$ for all $-\infty < a < b < \infty$.

As we make stronger assumptions on the local times $L(S)$, it is natural to expect that we can extend Wuermli's generalized It\^o formula to larger spaces of functions.

\begin{definition}\label{def:local2}
  Let $(\pi^n)$ be a sequence of partitions and let $S \in \L_{L^2}(\pi^n)$. We say that $S$ has a \textit{continuous local time} along $(\pi^n)$ if for all $t \in [0,\infty)$ the discrete pathwise local times $L^{\pi^n}_t(S, \cdot)$ converge uniformly to a continuous limit $L_t(S, \cdot)$ as $n \to \infty$ and if $(t,u)\mapsto L_t(S,u)$ is jointly continuous. We write $\L_{c}(\pi^n)$ for the set of all $S$ having a continuous local time along $(\pi^n)$.
\end{definition}

In the following theorem, $\mathrm{BV} = \mathrm{BV}(\R,\R)$ denotes the space of right-continuous bounded variation functions, equipped with the total variation norm.

\begin{theorem}\label{thm:gito}
  Let $(\pi^n)$ be a sequence of partitions and let $S \in \L_{c}(\pi^n)$. Let $f\colon \mathbb{R} \to \mathbb{R}$ be absolutely continuous with right-continuous Radon-Nikodym derivative $f^\prime$ of locally bounded variation. Then we have the generalized change of variable formula 
  \begin{equation*}
    f (S(t)) = f(S(0)) + \int_0^t f^\prime (S(s)) \dd S(s) + \int_{-\infty}^{\infty} L_t(u) \dd f^\prime (u)
  \end{equation*}
  for all $t \in [0,\infty)$, where
  \begin{equation}\label{eq:stochasticIntegral}
    \int_0^t f^\prime  (S(s)) \dd S(s) := \lim_{n \to \infty} \sum_{t_j \in \pi^n} f^\prime  (S(t_j)) (S(t_{j+1} \wedge t) - S(t_j \wedge t)), \quad t \in [0,\infty).
  \end{equation}  
  In particular, the integral $\int_0^\cdot g(S(s)) \dd S(s)$ is defined for all $g$ of locally bounded variation, and for all $T>0$ the map $\mathrm{BV} \ni g \mapsto \int_0^\cdot g(S(s)) \dd S(s) \in C([0,T],\R)$ defines a bounded linear operator.
\end{theorem}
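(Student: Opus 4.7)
The plan is to start from the identity~\eqref{eq:discrete local time ito}, whose derivation in the text required exactly the hypotheses assumed here ($f$ absolutely continuous with right-continuous locally $\mathrm{BV}$ derivative $f'$). Read along $\pi = \pi^n$, the inner bracket in that identity is by definition the discrete local time $L^{\pi^n}_t(u)$, so it becomes
\[
  f(S(t)) - f(S(0)) = \sum_{t_j \in \pi^n} f'(S(t_j\wedge t))\bigl(S(t_{j+1}\wedge t) - S(t_j\wedge t)\bigr) + \int_{\R} L^{\pi^n}_t(u)\dd f'(u),
\]
where the second summand is read as a Lebesgue--Stieltjes integral against the locally finite signed Borel measure $\dd f'$ induced by the right-continuous $\mathrm{BV}$ function $f'$.

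The decisive step is to pass to the limit $n\to\infty$ in both pieces simultaneously. Let $I_t := [\inf_{s\le t}S(s), \sup_{s\le t}S(s)]$, a compact interval by continuity of $S$. Each indicator $\1_{\llparenthesis S(t_j\wedge t), S(t_{j+1}\wedge t)\rrbracket}$ vanishes outside $I_t$, so $L^{\pi^n}_t$ and $L_t$ are both supported in $I_t$. The measure $\dd f'$ has finite total variation on $I_t$, and the hypothesis $S\in\L_c(\pi^n)$ gives $L^{\pi^n}_t\to L_t$ uniformly on $\R$, whence
\[
  \Bigl|\int_{\R} L^{\pi^n}_t\dd f' - \int_{\R} L_t\dd f'\Bigr| \le \|L^{\pi^n}_t - L_t\|_\infty\,|\dd f'|(I_t) \longrightarrow 0.
\]
Since the left-hand side of the identity does not depend on $n$, the Riemann sums must then converge to a finite limit, which we take as the definition of $\int_0^t f'(S(s))\dd S(s)$ in~\eqref{eq:stochasticIntegral}, and substitution yields the desired change-of-variable formula.

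For the bounded-linearity claim on $\mathrm{BV}$, given $g\in\mathrm{BV}$ I would set $G(x):=\int_0^x g(y)\dd y$ and apply the formula just established to $G$, which is absolutely continuous with right-continuous locally $\mathrm{BV}$ derivative $g$, obtaining
\[
  \int_0^t g(S(s))\dd S(s) = G(S(t)) - G(S(0)) - \int_{\R} L_t(u)\dd g(u),
\]
a representation which is manifestly linear in $g$. Bounding $|G(S(t)) - G(S(0))|\le \|g\|_\infty|S(t) - S(0)|$ and $|\int L_t\dd g|\le \|L_t\|_\infty\,\mathrm{TV}(g)$, and using the joint continuity of $L$ to control $\sup_{t\in[0,T]}\|L_t\|_\infty$, one arrives at an estimate of the form $\sup_{t\in[0,T]}|\int_0^t g(S)\dd S|\le C(S,T)\|g\|_{\mathrm{BV}}$; continuity of $t\mapsto \int_0^t g(S(s))\dd S(s)$ on $[0,T]$ then follows from the continuity of $S$ together with the joint continuity of $L$.

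The step I expect to require the most care is the limit passage in the $\dd f'$-integral: even though $\dd f'$ is only locally finite on $\R$, both $L^{\pi^n}_t$ and $L_t$ vanish outside the (path-dependent) compact interval $I_t$, which reduces the problem to uniform convergence of bounded functions against a finite signed measure. Once this observation is in place the rest of the argument is a localization of the scheme sketched after Theorem~\ref{thm:odf}, with the uniform convergence now available from $S\in\L_c(\pi^n)$ playing the role previously played by weak $L^2$ convergence.
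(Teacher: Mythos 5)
Your proposal is correct and follows essentially the same route as the paper's proof: both start from identity~\eqref{eq:discrete local time ito} with $\pi = \pi^n$, recognize the inner bracket as $L^{\pi^n}_t$, and pass to the limit using the uniform convergence $L^{\pi^n}_t \to L_t$ against the locally finite measure $\dd f'$, concluding that the Riemann sums must converge. The paper compresses this into ``our claim immediately follows''; you merely supply the (correct) supporting details — compact support of the local times in $[\inf_{s\le t}S(s),\sup_{s\le t}S(s)]$, the total-variation bound, and the explicit $\mathrm{BV}$-operator estimate.
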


\begin{proof}
  From~\eqref{eq:discrete local time ito} we get
  \begin{equation*}
    f(S( t)) - f(S(0)) =\sum_{t_j \in \pi^n} f^\prime  (S(t_j)) (S(t_{j+1}\wedge t) - S(t_j \wedge t)) 
        + \int_{-\infty}^\infty L^{\pi^n}_t(u) \dd f^\prime (u)
  \end{equation*}
  for all $t \ge 0$. Since $L^{\pi^n}_t$ converges uniformly to $L_t$, our claim immediately follows.
\end{proof}

Observe that $f$ satisfies the assumptions of Theorem~\ref{thm:gito} if and only if it is the difference of two convex functions. For such $f$, Sottinen and Viitasaari \cite{Sottinen2014} prove a generalized change of variable formula for a class of Gaussian processes. They make the very nice observation that for a suitable Gaussian process $X$ one can control the fractional Besov regularity of $f'(X)$, and they use this insight to construct $\int_0^\cdot f'(X_t) \dd X_t$ as a fractional integral. Such a regularity result is somewhat surprising since in general $f'(X)$ is not even l\`adl\`ag, so in particular not of finite $p$-variation for any $p > 0$. But since regularity of $f'(X)$ is shown using probabilistic arguments, the integral of Sottinen and Viitasaari is not directly a pathwise object: the null set outside of which it exists may depend on $f$. Moreover, they can only handle Gaussian processes that are H\"older continuous of order $\alpha > 1/2$, and their approach breaks down when considering processes with non-trivial quadratic variation.
Here we have a completely different focus, since we are interested in pathwise results for paths with non-trivial quadratic variation.

As an immediate consequence of Theorem \ref{thm:gito} we obtain a pathwise version of the classical Tanaka formula.

\begin{corollary}
  Let $(\pi^n)$ be a sequence of partitions and let $S \in \L_{c}(\pi^n)$. The pathwise Tanaka-Meyer formula 
  \begin{equation*}
    L_t(u) = ( S(t) - u )^- - ( S(0) - u )^- + \int_0^t \1_{(-\infty, u)} (S(s)) \dd S(s)
  \end{equation*}
  is valid for all $(t,u) \in [0,\infty) \times \mathbb{R}$, with the notation $(\cdot - u)^- := \max \{ 0, u - \cdot \}$. The analogous formulas for $\1_{[u, \infty)} (\cdot)$ and $\sgn (\cdot - u)$ hold as well.
\end{corollary}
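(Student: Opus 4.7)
The plan is to apply Theorem~\ref{thm:gito} to the single function $f(x) := (x-u)^-$ with $u$ fixed, and then read off the identity. First I would verify that $f$ satisfies the hypotheses: since $f(x) = (u-x)\vee 0$ is Lipschitz, it is absolutely continuous, and a direct computation shows that the right-continuous version of its Radon–Nikodym derivative is
\[
  f'(x) = -\1_{(-\infty,u)}(x) = -1 + \1_{[u,\infty)}(x),
\]
which is a right-continuous step function, hence of locally bounded variation with associated Lebesgue–Stieltjes measure $\dd f' = \delta_u$.

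Plugging $f$ into Theorem~\ref{thm:gito} gives
\[
  (S(t)-u)^- - (S(0)-u)^- = -\int_0^t \1_{(-\infty,u)}(S(s))\dd S(s) + \int_{-\infty}^{\infty} L_t(v)\,\delta_u(\dd v),
\]
and the last integral is simply $L_t(u)$ by joint continuity of the local time (as ensured by $S \in \L_c(\pi^n)$). Rearranging yields the claimed pathwise Tanaka–Meyer formula, valid for every $(t,u)\in[0,\infty)\times\R$.

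For the remaining two formulas I would repeat the same recipe with different test functions. Taking $f(x) = (x-u)^+$ gives $f'(x) = \1_{[u,\infty)}(x)$ with $\dd f' = \delta_u$ again, so Theorem~\ref{thm:gito} yields
\[
  L_t(u) = (S(t)-u)^+ - (S(0)-u)^+ - \int_0^t \1_{[u,\infty)}(S(s))\dd S(s),
\]
while taking $f(x) = |x-u|$ gives $f'(x) = \sgn(x-u)$ (with the right-continuous convention $\sgn(0)=1$) and $\dd f' = 2\delta_u$, producing
\[
  2L_t(u) = |S(t)-u| - |S(0)-u| - \int_0^t \sgn(S(s)-u)\dd S(s).
\]
There is no real obstacle here: once Theorem~\ref{thm:gito} is in hand, the only thing to check is the identification of $\dd f'$ as a Dirac mass at $u$ for each of these three piecewise linear $f$'s.
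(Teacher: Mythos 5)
Your proof is correct and is precisely the argument the paper intends: the corollary is stated as an immediate consequence of Theorem~\ref{thm:gito}, obtained by applying it to $f(x)=(x-u)^-$ (and to $(x-u)^+$ and $|x-u|$ for the variants), with the right-continuous derivatives and Dirac masses $\dd f'$ identified exactly as you do. The only cosmetic remark is that evaluating $\int L_t(v)\,\delta_u(\dd v)=L_t(u)$ needs no continuity of $L_t$ — it is the definition of the Dirac mass — though continuity is of course what makes Theorem~\ref{thm:gito} applicable in the first place.
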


At this point we see a picture emerge: the more regularity the local time has, the larger the space of functions is to which we can extend our pathwise stochastic integral. Indeed, the previous examples are all based on duality between the derivative of the integrand and the occupation measure. In the classical F\"ollmer-It\^o case and for fixed time $T\ge 0$, the occupation measure is just a finite measure on a compact interval $[a,b]$, and certainly the continuous functions belong to the dual space of the finite measures on $[a,b]$. In the Wuermli setting, the occupation measure has a density in $L^2$ and therefore defines a bounded functional on $L^2$. If the local time is continuous, then we can even integrate Radon measures against it.

So if we can quantify the continuity of the local time, then the dual space further increases and we can extend the pathwise It\^o formula to a bigger class of functions. To this end we introduce for a given sequence of partitions $(\pi^n)$ and $p \ge 1$ the set $\mathcal{L}_{c,p}(\pi^n) \subseteq \mathcal{L}_c(\pi^n)$ consisting of those $S \in \mathcal{L}_c(\pi^n)$ for which the discrete local times $(L_t^{\pi^n})$ have uniformly bounded $p$-variation, uniformly in $t \in [0,T]$ for all $T > 0$, i.e. for which
\begin{equation*}
  \sup_{n \in \N} \lVert L^{\pi^n} \rVert_{C_T \V^p} := \sup_{n \in \N} \sup_{t\in [0,T]} \| L_t^{\pi^n}(\cdot)\|_{p\mhyphen\var} < \infty
\end{equation*}
for all $T > 0$, where we write for any $f: \R \rightarrow \R$
\[
   \lVert f \rVert_{p\mhyphen\mathrm{var}}:=\sup\bigg\{ \bigg(\sum_{k=1}^n | f(u_k) - f(u_{k-1})|^{p}\bigg)^{1/p} :\, -\infty < u_0 < \ldots < u_n < \infty ,\, n \in \N\bigg\}.
\]
We also write $\V^p$ for the space of right-continuous functions of finite $p$-variation, equipped with the maximum of the $p$-variation seminorm and the supremum norm.
 \\
For $S \in \L_{c,p}(\pi^n)$ and using the Young integral it is possible to extend the pathwise Tanaka formula to an even larger class of integrands, allowing us to integrate $\int g(S) \dd S$ provided that $g$ has finite $q$-variation for some $q$ with $1/p + 1/q >1$. This is similar in spirit to the Bouleau-Yor~\cite{Bouleau1981} extension of the classical Tanaka formula. Such an extension was previously derived by Feng and Zhao~\cite{Feng2006}, Theorem 2.2. But Feng and Zhao stay in a semimartingale setting, and they interpret the stochastic integral appearing in~\eqref{eq:gito2} as a usual It\^o integral. Here we obtain a pathwise integral, which is given very naturally as a limit of Riemann sums.

Let us briefly recall the main concepts of Young integration. In~\cite{Young1936}, Young showed that if $-\infty < a < b < \infty$, if $f$ and $g$ are two functions on $[a,b]$ of finite $p$- and $q$-variation respectively with $1/p + 1/q >1$, and if $\pi$ is a partition of $[a,b]$, then there exists a universal constant $C(p,q)>0$ such that
\[
   \Big| \sum_{t_j,t_{j+1} \in \pi} f(t_j) (g(t_{j+1}) - g(t_j)) \Big| \le C(p,q) \| f \|_{p\mhyphen\var, [a,b]} \| g\|_{q\mhyphen\var,[a,b]},
\]
where we wrote $\| f \|_{p\mhyphen\var, [a,b]}$ for the $p$-variation of $f$ on $[a,b]$ and similarly for $g$. In particular, if there exists a sequence of partitions $(\pi^n)$ and if the Riemann sums of $f$ against $g$ along $(\pi^n)$ converge to a limit which we denote by $\int_a^b f(s) \dd g(s)$, then
\begin{equation}\label{eq:young bound}
  \Big| \int_a^b f(s) \dd g(s) \Big| \le C(p,q) (|f(a)|+ \| f \|_{p\mhyphen\var, [a,b]}) \| g\|_{q\mhyphen\var,[a,b]}.
\end{equation}
Moreover, Young showed that if $f$ and $g$ have no common points of discontinuity, then the Riemann sums along any sequence of partitions with mesh size going to zero converge to the same limit $\int_0^t f(s) \dd g(s)$, independently of the specific sequence of partitions.

We therefore easily obtain the following theorem.

\begin{theorem}[see also~\cite{Feng2006}, Theorem~2.2]\label{thm:gito2}
  Let $p,q\ge 1$ be such that $ \frac{1}{p} + \frac{1}{q}> 1$. Let $(\pi^n)$ be a sequence of partitions and let $S \in \mathcal{L}_{c,p}(\pi^n)$. Let $f \colon \mathbb{R}\to \mathbb{R}$ be absolutely continuous with right-continuous Radon-Nikodym derivative $f^\prime $ of locally finite $q$-variation. Then for all $t \in [0, \infty)$ the generalized change of variable formula 
  \begin{equation}\label{eq:gito2}
    f(S(t)) =f(S(0)) + \int_0^t f^\prime (S(s))\dd S(s) + \int_{-\infty}^\infty L_t(u) \dd f^\prime (u) 
  \end{equation}
  holds, where $ \dd f^\prime (u)$ denotes Young integration and where
  \[
      \int_0^t f^\prime (S(s))\dd S(s) := \lim_{n \to \infty} \sum_{t_j \in \pi^n} f^\prime  (S(t_j)) (S(t_{j+1} \wedge t) - S(t_j \wedge t)), \quad t \in [0,\infty).
  \]
  In particular, the integral $\int_0^\cdot g(S(s)) \dd S(s)$ is defined for all right-continuous $g$ of locally finite $q$-variation, and for all $T > 0$ the map $\V^q \ni g \mapsto \int_0^\cdot g(S(s)) \dd S(s) \in C([0,T],\R)$ defines a bounded linear operator.
\end{theorem}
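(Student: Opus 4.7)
The plan is to mirror the proof of Theorem~\ref{thm:gito}, but to reinterpret the integral against $\dd f'$ in identity \eqref{eq:discrete local time ito} as a Young integral. First I would revisit the derivation preceding \eqref{eq:discrete local time ito}. The only step that required $f'$ to have bounded variation was the integration by parts
\[
  \int_{(a,b]} (f'(y) - f'(a)) \dd y = \int_{(a,b]} (b - t) \dd f'(t),
\]
and this identity remains valid for $f'$ of locally finite $q$-variation provided the right-hand side is understood as a Young integral: since $t \mapsto b - t$ has finite $1$-variation and $1/1 + 1/q > 1$, Young integration by parts applies (no boundary corrections appear, as $b - t$ vanishes at $t = b$ and is continuous) and yields exactly the same relation. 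Summing as in \eqref{eq:discrete local time ito} therefore gives
\[
  f(S(t)) - f(S(0)) = \sum_{t_j \in \pi^n} f'(S(t_j))(S(t_{j+1}\wedge t) - S(t_j \wedge t)) + \int_{-\infty}^\infty L_t^{\pi^n}(u) \dd f'(u),
\]
the last integral now being a well-defined Young integral since $L_t^{\pi^n}$ has finite $p$-variation by the assumption $S \in \mathcal{L}_{c,p}(\pi^n)$.

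The heart of the argument is then to show
\[
  \int_{-\infty}^\infty L_t^{\pi^n}(u) \dd f'(u) \xrightarrow{n \to \infty} \int_{-\infty}^\infty L_t(u) \dd f'(u).
\]
Since $S$ is continuous, all $L_t^{\pi^n}$ and $L_t$ are supported in a common compact interval $K_t \subset \R$ on which $f'$ has finite $q$-variation. The hypothesis $S \in \mathcal{L}_{c,p}(\pi^n)$ gives a uniform $p$-variation bound for $(L_t^{\pi^n})_n$, while $S \in \mathcal{L}_c(\pi^n)$ provides uniform convergence $L_t^{\pi^n} \to L_t$. The elementary interpolation inequality
\[
  \|g\|_{p'\mhyphen\var} \le \bigl(2\|g\|_\infty\bigr)^{(p'-p)/p'} \|g\|_{p\mhyphen\var}^{p/p'}, \qquad p' > p,
\]
applied to $g = L_t^{\pi^n} - L_t$, forces $L_t^{\pi^n} \to L_t$ in $p'$-variation for every $p' > p$, and by lower semicontinuity of the $p$-variation $\|L_t\|_{p\mhyphen\var} < \infty$ so that the limit Young integral is well-defined. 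Choosing $p' > p$ with $1/p' + 1/q > 1$, which is possible since $1/p + 1/q > 1$, and applying the Young bound \eqref{eq:young bound} to $L_t^{\pi^n} - L_t$ on $K_t$ yields the claimed convergence. Combined with the identity above, this forces the Riemann sums to converge and produces \eqref{eq:gito2}.

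For the continuity of the linear operator one applies \eqref{eq:gito2} to the antiderivative of $g \in \V^q$, rearranges, and uses Young's inequality to bound
\[
  \Bigl| \int_0^t g(S(s))\dd S(s) \Bigr| \le \|g\|_{L^\infty(K_T)} |S(t) - S(0)| + C(p',q)\bigl(|L_t(u_0)| + \|L_t\|_{p'\mhyphen\var}\bigr)\|g\|_{q\mhyphen\var, K_T}
\]
uniformly in $t \in [0,T]$, while joint continuity in $t$ follows from joint continuity of $(t,u) \mapsto L_t(u)$ combined with the Young estimate. The main obstacle is upgrading \eqref{eq:discrete local time ito} to its Young-integration form and justifying the interpolation step (in particular, reconciling the left-continuity of $L_t^{\pi^n}$ with the right-continuity of $f'$); once these are in place, everything reduces to a careful application of \eqref{eq:young bound} on the compact interval $K_t$.
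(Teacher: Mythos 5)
Your proposal is correct and follows essentially the same route as the paper's proof: establish the discrete identity with the $\dd f'$ integral read as a Young integral, use the uniform $p$-variation bound plus uniform convergence to get convergence in $p'$-variation for $p'>p$ with $1/p'+1/q>1$, and conclude via the Young estimate \eqref{eq:young bound}. The only differences are cosmetic — you justify the identity by per-increment Young integration by parts whereas the paper notes that $L^{\pi^n}_t$ is piecewise smooth and of bounded variation (handling the common-discontinuity issue by including the jump points of $L^{\pi^n}_t$ in the approximating partitions), and you make explicit the interpolation inequality that the paper dismisses as ``easy to see''.
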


\begin{proof}
  Observe that for each $n\in \mathbb{N}$, the discrete local time $L^{\pi^n}_t$ is piecewise smooth and of bounded variation. Therefore, formula~\eqref{eq:discrete local time ito} holds for $L^{\pi^n}_t$ and $f^\prime $, and the integral on the right hand side of~\eqref{eq:discrete local time ito} is given as the limit of Riemann sums along an arbitrary sequence of partitions with mesh size going to zero -- provided that every element of the sequence contains all jump points of $L^{\pi^n}_t$. 
  Therefore, the integral must satisfy the bound~\eqref{eq:young bound}. Since the $p$-variation of $(L^{\pi^n}_t)$ is uniformly bounded, and the sequence converges uniformly to $L_t$, it is easy to see that it must converge in $p'$-variation for all $p'>p$. Choosing such a $p'$ with $1/q + 1/p'>1$ and combining the linearity of the Young integral with the bound~\eqref{eq:young bound}, the result follows.
\end{proof}

\begin{remark}\label{rmk:feng}
  Theorem 2.2 in \cite{Feng2006} states~\eqref{eq:gito2} under the slightly weaker assumption that $f \colon \mathbb{R}\to \mathbb{R}$ is left-continuous and locally bounded with left-continuous and locally bounded left derivative $\DD^-f$ of finite $q$-variation. But absolute continuity of $f$ is clearly necessary: Consider the path $S(t)\equiv t$ for $t \in [0,\infty)$, for which $\langle S \rangle \equiv 0$ and thus $L \equiv 0$. In this case equation \eqref{eq:gito2} would read as
  \begin{equation*}
    f(t)= f(0) + \int_0^t \DD^-f(u)\dd u, \quad t \in [0,\infty),
  \end{equation*}
  a contradiction if $f$ is not absolutely continuous.
\end{remark}

In the following, we will show that any typical price path which might model an asset price trajectory must be in $\mathcal{L}_{c,p}(\pi^n)$ if $(\pi^n)$ denotes the dyadic Lebesgue partition generated by $S$.

\section{Local times for model free finance}\label{sec:localtime}

\subsection{Super-hedging and outer measure}

In a recent series of papers \cite{Vovk2011,Vovk2011a,Vovk2012}, Vovk introduced a hedging based, model free approach to mathematical finance. Roughly speaking, Vovk considers the set of real-valued continuous functions as price paths and introduces an outer measure on this set which is given by the cheapest super-hedging price. A property (P) is said to hold for ``typical price paths'' if it is possible to make an arbitrarily large profit by investing in the paths where (P) is violated. We will see that in Vovk's framework it is possible to construct continuous local times for typical price paths, which gives an axiomatic justification for the use of our pathwise generalized It\^o formulas from Section~\ref{sec:tanaka} in model free finance.

More precisely, we consider the (sample) space $\Omega = C([0,\infty), \mathbb{R})$ of all continuous functions $\omega \colon [0,\infty) \to \mathbb{R}$.
The coordinate process on $\Omega$ is denoted by $S_t(\omega):=\omega(t)$. For $t \in [0,\infty)$ we define $\mathcal{F}_t := \sigma(S_s: s \le t)$ and we set $\mathcal{F}:= \bigvee_{t\geq 0} \mathcal{F}_t$. Stopping times $\tau$ and the associated $\sigma$-algebras $\F_\tau$ are defined as usual.

A process $H \colon \Omega \times [0,\infty) \rightarrow \R$ is called a \emph{simple strategy} if there exist stopping times $0 = \tau_0(\omega) < \tau_1(\omega) < \dots$ 
such that for every $\omega \in \Omega$ and every $T\in (0,\infty)$ we have $\tau_n(\omega) \leq T$ for only finitely many $n$, and $\F_{\tau_n}$-measurable bounded functions 
$F_n\colon \Omega \rightarrow \R$ such that $H_t(\omega) = \sum_{n\ge 0} F_n(\omega) \1_{(\tau_n(\omega),\tau_{n+1}(\omega)]}(t)$. In that case the integral
\begin{align*}
  (H \cdot S)_t(\omega) = \sum_{n=0}^\infty F_n(\omega) [S_{\tau_{n+1}(\omega) \wedge t} - S_{\tau_n(\omega) \wedge t}]
\end{align*}
is well defined for every $\omega \in \Omega$ and every $t \in [0,\infty)$. 

For $\lambda > 0$ a simple strategy $H$ is called $\lambda$-\textit{admissible} if $(H\cdot S)_t(\omega) \ge - \lambda$ for all $t \in [0, \infty)$ and all $\omega \in \Omega$. The set of $\lambda$-admissible simple strategies is denoted by $\mathcal{H}_\lambda$.

\begin{definition}
  The \emph{outer measure} $\overline{P}$ of $A \subseteq \Omega$ is defined as the cheapest superhedging price for $\1_A$, that is
  \begin{align*}
    \overline{P}(A) &:= \inf\Big\{\lambda > 0: \exists(H^n)_{n\in \N} \subseteq \mathcal{H}_\lambda \text{s.t.}\liminf_{t\to \infty} \liminf_{n\rightarrow\infty} (\lambda + (H^n\cdot S)_t (\omega)) \ge \1_A(\omega) \forall \omega \in \Omega\Big\}.
  \end{align*}    
  A set of paths $A \subseteq \Omega$ is called a \textit{null set} if it has outer measure zero. A property (P) holds for \emph{typical price paths} if the set $A$ where (P) is violated is a null set.
\end{definition}

  Of course, it would be more natural to minimize over simple trading strategies rather than over the limit inferior along sequences of simple strategies. But then $\overline{P}$ would not be countably subadditive, and this would make it very difficult to work with. Let us just remark that in the classical definition of superhedging prices in semimartingale models we work with general admissible strategies, and the It\^o integral against a general strategy is given as limit of integrals against simple strategies. So in that sense our definition is analogous to the classical one (apart from the fact that we do not require convergence and consider the $\liminf$ instead).

For us, the most important property of $\overline{P}$ is the following arbitrage interpretation for null sets.

\begin{lemma}[Lemma~2.4 of~\cite{Perkowski2013}]\label{lem:na1}
  A set $A \subseteq \Omega$ is a null set if and only if there exists a sequence of 1-admissible simple strategies $(H^n)_{n\in \mathbb{N}} \subseteq \mathcal{H}_1$, such that
  \begin{align*}
    \liminf_{t\to \infty} \liminf_{n \rightarrow \infty} (1 + (H^n \cdot S)_t (\omega)) \ge \infty \cdot \1_A(\omega),
  \end{align*}
  where we set $\infty \cdot 0 = 0$ and  $\infty \cdot 1 = \infty$.
\end{lemma}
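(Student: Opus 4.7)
The plan is to treat the two implications by different techniques: simple rescaling for ($\Leftarrow$) and a diagonal summation for ($\Rightarrow$).

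For the easy direction, suppose $(H^n)\subseteq\mathcal{H}_1$ satisfies $\liminf_{t}\liminf_{n}(1+(H^n\cdot S)_t)\ge\infty\cdot\1_A$. I would fix an arbitrary $\varepsilon>0$, note that $\varepsilon H^n\in\mathcal{H}_\varepsilon$ (still a simple strategy), and compute
\[
\liminf_{t\to\infty}\liminf_{n\to\infty}(\varepsilon+(\varepsilon H^n\cdot S)_t(\omega))=\varepsilon\liminf_{t\to\infty}\liminf_{n\to\infty}(1+(H^n\cdot S)_t(\omega))\ge\infty\cdot\1_A(\omega)\ge\1_A(\omega)
\]
for every $\omega\in\Omega$. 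This yields $\overline{P}(A)\le\varepsilon$, and sending $\varepsilon\downarrow 0$ gives $\overline{P}(A)=0$.

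For the main direction, from $\overline{P}(A)=0$ I extract, for each $n\in\N$, a sequence $(G^{n,k})_{k\in\N}\subseteq\mathcal{H}_{2^{-n}}$ with $\liminf_{t}\liminf_{k}(2^{-n}+(G^{n,k}\cdot S)_t)\ge\1_A$, and aggregate them into the diagonal finite sum
\[
H^k:=\sum_{n=1}^{k}G^{n,k}.
\]
This is again a simple strategy, obtained by refining to the union of the finitely many stopping-time sequences appearing in the sum, and it is $\sum_{n=1}^{k}2^{-n}$-admissible, hence $H^k\in\mathcal{H}_1$. To verify the infinite-liminf bound I would fix $\omega$, $t$, and a cutoff $N_0\in\N$, restrict to $k\ge N_0$, and split the sum at $N_0$: the tail is bounded below by $-\sum_{n=N_0+1}^{k}2^{-n}\ge-2^{-N_0}$ by the $2^{-n}$-admissibility of the $G^{n,k}$, and after rearranging the constants this yields the key pointwise bound
\[
1+(H^k\cdot S)_t(\omega)\ge\sum_{n=1}^{N_0}\bigl(2^{-n}+(G^{n,k}\cdot S)_t(\omega)\bigr).
\]
Each summand on the right is nonnegative, so superadditivity of $\liminf$ can be applied twice (once in $k$ and once in $t$), giving
\[
\liminf_{t\to\infty}\liminf_{k\to\infty}(1+(H^k\cdot S)_t(\omega))\ge\sum_{n=1}^{N_0}\liminf_{t}\liminf_{k}\bigl(2^{-n}+(G^{n,k}\cdot S)_t(\omega)\bigr)\ge N_0\,\1_A(\omega),
\]
and since $N_0$ is arbitrary, this yields the desired $\infty\cdot\1_A$ bound.

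The hard part will be finding the aggregation $H^k$ that simultaneously fits inside the $1$-admissibility budget \emph{and} lets a nested liminf argument go through: the geometric sum $\sum 2^{-n}=1$ matching $\mathcal{H}_1$ is essential, and the nonnegativity provided by $2^{-n}$-admissibility of each $G^{n,k}$ is exactly what makes $\liminf$ superadditive on the truncated sum at an arbitrary cutoff $N_0$. Checking that finite sums of simple strategies remain simple, and that linearity of the integral $(H\cdot S)$ transfers to the sum, is a routine bookkeeping step once stopping-time sequences are interleaved.
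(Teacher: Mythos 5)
Your argument is correct and is essentially the proof given for Lemma~2.4 in the cited reference \cite{Perkowski2013} (the present paper only quotes the lemma): rescaling by $\varepsilon$ for the easy direction, and for the converse the diagonal aggregation $H^k=\sum_{n=1}^k G^{n,k}$ with $G^{n,k}\in\mathcal{H}_{2^{-n}}$, using the geometric budget $\sum_n 2^{-n}=1$ and superadditivity of $\liminf$ on the nonnegative truncated sum. The only points you gloss over --- that $\overline{P}(A)=0$ indeed supplies strategies in $\mathcal{H}_{2^{-n}}$ with the bound at level $2^{-n}$, and that finite sums of simple strategies are simple --- are routine and handled the same way in the reference.
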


In other words, a null set is essentially a model free arbitrage opportunity of the first kind, and to only work with typical price paths is analogous to only considering models which satisfy (NA1) (no arbitrage opportunities of the first kind). The notion (NA1) has raised a lot of interest in recent years since it is the minimal condition which has to be satisfied by any reasonable asset price model; see for example~\cite{Karatzas2007, Ruf2013, Imkeller2015}. If $\P$ is a probability measure on $(\Omega, \F)$, we say that $S$ satisfies \textit{(NA1)} under $\P$ if the set $\mathcal{W}^{\infty}_1 := \{ 1 + \int_0^\infty H_s \dd S_s\, : \, H \in \mathcal{H}_1 \}$ is bounded in probability, that is if $\lim_{n \to \infty} \sup_{X\in \mathcal{W}^{\infty}_1} \mathbb{P}( X \geq n)=0$. In the continuous setting this is equivalent to $S$ being a semimartingale of the form $S = M + \int_0^\cdot \alpha_s \dd \langle M \rangle_s$, where $M$ is a local martingale and $\int_0^\infty \alpha^2_s \dd \langle M \rangle_s < \infty$.

In the next proposition we collect further properties of $\overline{P}$. For proofs (in finite time) see~\cite{Perkowski2013}.

\begin{proposition}\label{prop:prop}
  \begin{enumerate} 
    \item $\overline{P}$ is an outer measure with $\overline{P}(\Omega)=1$, i.e. $\overline{P}$ is nondecreasing, countably subadditive, and $\overline{P}(\emptyset) = 0$.
    \item Let $\mathbb{P}$ be a probability measure on $(\Omega, \F)$ such that the coordinate process $S$ is a $\mathbb{P}$-local martingale, and let $A \in \F$. Then $\mathbb{P}(A) \le \overline{P}(A)$.
    \item Let $A \in \F$ be a null set, and let $\mathbb{P}$ be a probability measure on $(\Omega, \F)$ such that the coordinate process $S$ satisfies (NA1) under $\P$. Then $\mathbb{P}(A) = 0$.
  \end{enumerate}
\end{proposition}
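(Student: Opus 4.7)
The plan is to treat the three assertions in order, exploiting that for any simple admissible strategy $H$ the wealth process $\lambda + (H \cdot S)$ is a nonnegative local martingale (hence a supermartingale) under every probability that makes $S$ a local martingale, while under the weaker assumption (NA1) the terminal wealths $1 + (H \cdot S)_\infty$ are by definition bounded in probability. For part~(i), monotonicity is built into the definition, and the bound $\overline{P}(\Omega) \le 1$ is witnessed by the zero strategies with $\lambda = 1$; the converse inequality $\overline{P}(\Omega) \ge 1$ will follow from part~(ii) applied to the Dirac mass at any constant path. For countable subadditivity, given $A \subseteq \bigcup_k A_k$ and $\varepsilon > 0$, I would pick for each $k$ a sequence $(H^{n,k})_n \subseteq \mathcal{H}_{\lambda_k}$ with $\lambda_k \le \overline{P}(A_k) + \varepsilon 2^{-k}$ realising the infimum, set $\lambda := \sum_k \lambda_k$, and form the truncated sums $H^n := \sum_{j=1}^{n} H^{n,j}$. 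Each $H^n$ is then a finite sum of simple strategies, hence simple and $\lambda$-admissible, and for $\omega \in A_k$ and $n \ge k$ the admissibility bound $\sum_{j \le n,\, j \ne k}(H^{n,j}\cdot S)_t \ge -(\lambda - \lambda_k)$ reduces the double liminf to $\liminf_t\liminf_n (\lambda_k + (H^{n,k}\cdot S)_t) \ge 1$, so $\overline{P}(A) \le \lambda \le \sum_k \overline{P}(A_k) + \varepsilon$.

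For part~(ii), let $(H^n) \subseteq \mathcal{H}_\lambda$ be a near-optimal sequence witnessing $\overline{P}(A) \le \lambda$. Under $\mathbb{P}$ each $\lambda + (H^n\cdot S)$ is a nonnegative local martingale starting at $\lambda$, hence a supermartingale, so $\mathbb{E}[\lambda + (H^n\cdot S)_t] \le \lambda$ for every $t$. One application of Fatou in $n$ yields $\mathbb{E}[\liminf_n(\lambda + (H^n\cdot S)_t)] \le \lambda$, and since $T \mapsto \inf_{t \ge T} \liminf_n(\lambda + (H^n\cdot S)_t)$ is a nondecreasing family of nonnegative random variables bounded in $L^1$ by $\lambda$, monotone convergence gives $\mathbb{E}[\liminf_t\liminf_n(\lambda + (H^n\cdot S)_t)] \le \lambda$. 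Combining with the superhedging inequality $\liminf_t\liminf_n(\lambda + (H^n\cdot S)_t) \ge \1_A$ and infimising over $\lambda$ yields $\mathbb{P}(A) \le \overline{P}(A)$.

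For part~(iii), invoke Lemma~\ref{lem:na1} to fix 1-admissible simple strategies $(H^n)$ with $\liminf_t\liminf_n(1 + (H^n\cdot S)_t) \ge \infty \cdot \1_A$. For each $M > 0$ introduce the stopping times $\tau_{n,M} := \inf\{t : 1 + (H^n\cdot S)_t \ge M\}$ and the stopped strategies $G^{n,M} := H^n \1_{[0,\tau_{n,M}]}$, which remain simple and 1-admissible. Continuity of $S$ makes $(H^n\cdot S)$ continuous in $t$, so $1 + (G^{n,M}\cdot S)_\infty \ge M$ on $\{\tau_{n,M} < \infty\}$, and equals $1 + (H^n\cdot S)_\infty \ge 0$ otherwise (the limit existing by supermartingale convergence under $\mathbb{P}$). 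In particular $1 + (G^{n,M}\cdot S)_\infty$ is a well-defined element of $\mathcal{W}^{\infty}_1$. The divergence $\liminf_t\liminf_n(1 + (H^n\cdot S)_t(\omega)) = \infty$ on $A$ forces $\tau_{n,M}(\omega) < \infty$ for all sufficiently large $n$, so $A \subseteq \liminf_n\{\tau_{n,M} < \infty\}$, and Fatou yields
\[
  \mathbb{P}(A) \;\le\; \liminf_n \mathbb{P}(\tau_{n,M} < \infty) \;\le\; \sup_{X \in \mathcal{W}^{\infty}_1} \mathbb{P}(X \ge M).
\]
Letting $M \to \infty$ and invoking (NA1) concludes $\mathbb{P}(A) = 0$.

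The main obstacles I anticipate are bookkeeping rather than conceptual: in part~(i), ensuring that the finite-sum truncation preserves both $\lambda$-admissibility and the double-liminf superhedging inequality; in part~(ii), threading Fatou and monotone convergence through the iterated liminf in the correct order; and in part~(iii), confirming that $G^{n,M}$ produces a genuine element of $\mathcal{W}^{\infty}_1$, the only delicate point being the existence of the terminal integral on $\{\tau_{n,M} = \infty\}$, which requires one extra appeal to supermartingale convergence under $\mathbb{P}$.
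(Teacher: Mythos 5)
Your argument is correct and follows essentially the same route as the proofs in \cite{Perkowski2013} to which the paper defers for this proposition: the $\varepsilon 2^{-k}$ summation of witnessing strategies for countable subadditivity, the supermartingale-plus-Fatou bound for local martingale measures, and the stopping-at-level-$M$ contradiction with boundedness in probability for (NA1). Two small imprecisions are worth fixing but are not gaps: in (ii), the uncountable infimum $\inf_{t \ge T}$ raises a measurability question that is cleanly avoided by bounding $\1_A \le \liminf_m \liminf_n (\lambda + (H^n \cdot S)_{t_m})$ along a countable sequence $t_m \to \infty$ and applying Fatou twice; and in (iii), under (NA1) the wealth process $1 + (G^{n,M}\cdot S)$ need not be a supermartingale under $\P$ (only $Z(1+(G^{n,M}\cdot S))$ is, for a local martingale deflator $Z$), so to guarantee a well-defined element of $\mathcal{W}^\infty_1$ one should either deflate before invoking convergence or, more simply, stop additionally at a deterministic horizon $T$, use $\{\tau_{n,M} \le T\} \subseteq \{1 + (G^{n,M,T}\cdot S)_\infty \ge M\}$, and let $T \to \infty$.
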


The last statement says that every property which is satisfied by typical price paths holds quasi-surely for all probability measures which might be of interest in mathematical finance.\medskip

Lemma~\ref{lem:na1} and Proposition~\ref{prop:prop} are originally due to Vovk, but here and in~\cite{Perkowski2013} we consider a small modification of Vovk's outer measure, which in our opinion has a slightly more natural financial interpretation and with which it is easier to work.

\subsection{Existence of local times for typical price paths}

This subsection is devoted to the presentation and the proof of our main result (Theorem~\ref{thm:int}): every typical price path has a local time which satisfies all the requirements needed to apply our most general It\^o-Tanaka formula, Theorem~\ref{thm:gito2}.\\
For this purpose recall that for every partition $\pi(\omega)=\{0=t_0(\omega) < t_1(\omega)< \ldots < t_{K(\omega)}(\omega) < t_{(K+1)(\omega)}(\omega) =\infty \}$ of $[0,\infty)$ a discrete version of the local time is given by 
\begin{equation*}
  L^{\pi}_t( S, u )(\omega) = \sum_{j=0}^{K(\omega)} \1_{\llparenthesis S_{t_j\wedge t}(\omega), S_{t_{j+1}\wedge t}(\omega) \rrbracket} (u) \vert S_{t_{j+1}\wedge t}(\omega) - u \vert, \quad (t,u) \in [0,\infty)\times \mathbb{R}.
\end{equation*}
From~\eqref{eq:discrete local time ito} we get the following discrete version of Tanaka's formula, which can also be obtained by direct computation:
\begin{equation}\label{eq:discrete tanaka}
  L^{\pi}_t (S, u)(\omega)  = (S_t(\omega) - u )^- - (S_0(\omega) - u)^- + \sum_{j = 0}^{K(\omega)} \1_{(-\infty, u )} (S_{t_j}(\omega))[S_{t_{j+1}\wedge t}(\omega)-S_{t_j\wedge t}(\omega)]
\end{equation}
for all $(t,u) \in [0,\infty)\times \mathbb{R}$ and $\omega \in \Omega$. Taking a sequence of partitions with mesh size converging to zero, we see that at least formally the construction of the stochastic integral $\int_0^\cdot \1_{(-\infty,u)}(S_s) \dd S_s(\omega)$ is equivalent to the construction of the local time $L(S,u)(\omega)$.

In the following we will work with a very natural sequence of partitions, namely the dyadic Lebesgue partitions generated by $S$:
For each $n \in \mathbb{N}$ denote $\mathbb{D}^n:= \{ k 2^{-n} \, : \, k \in \mathbb{Z} \}$ and define the sequence of stopping times
\begin{equation}\label{eq:stop}
  \tau^n_0(\omega) := 0, \quad \tau^n_{k+1}(\omega):= \inf \{ t \geq \tau^n_k(\omega) \,: \, S_t(\omega) \in \mathbb{D}^n \setminus S_{\tau^n_k(\omega)}(\omega) \}, \quad  k \in \mathbb{N}.
\end{equation}
We set $\pi^n(\omega):=\{0= \tau^n_0 (\omega)<  \tau^n_1(\omega) < \dots\}$. Note that the functions $\tau^n_k(\omega)$ are stopping times and that $(\pi^n(\omega))$ is increasing, i.e. it holds $\pi^n(\omega)\subset \pi^{n+1}(\omega)$ for all $n \in \mathbb{N}$. From now on we will mostly omit the $\omega$ and just write $\pi^n$ and $\tau^n_k$ instead of $\pi^n(\omega)$ and $\tau^n_k(\omega)$, respectively. \medskip

A key ingredient for our construction of the local time is the following analysis of the number of interval crossings. Let $U_t(\omega,a,b)$ be the number of upcrossings of the closed interval $[a,b] \subseteq \mathbb{R}$ by $S(\omega)$ during the time interval $[0,t]$, where an upcrossing is a pair $(u,v)\in [0,t]^2$ with $u<v$ such that $S_u(\omega)=a$, $S_v(\omega)=b$ and $S_w(\omega) \in (a,b)$ for all $w \in (u,v)$. Downcrossings are defined analogously and we write $D_t(\omega,a,b)$ for the number of downcrossings by  $\omega \in \Omega$ during the time interval $[0,t]$.

\begin{lemma}\label{lem:upcrossings}
  For typical price paths $\omega \in \Omega$, there exists $C(\omega)\colon (0,\infty) \to (0,\infty)$ such that 
  \begin{equation*}
    \max_{k \in \mathbb{Z}}\, \big( U^n_T(\omega,k2^{-n}) + D^n_T(\omega,k2^{-n}) \big)\leq C_T(\omega) n^2 2^n
  \end{equation*}
  for all $n \in \mathbb{N}$, $T>0$, where $U^n_T(\omega,u):=U_T(\omega,u,u + 2^{-n})$ for $u \in \R$, and similarly for the number of downcrossings.
\end{lemma}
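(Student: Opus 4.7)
The plan is to construct, for each dyadic level $k 2^{-n}$, a simple $1$-admissible trading strategy whose wealth grows geometrically with the number of upcrossings (resp.\ downcrossings) of $[k 2^{-n}, (k+1) 2^{-n}]$, and then to combine this with a union bound over levels and a summation over $n$ using the countable subadditivity of $\overline P$. I would first localise by the stopping times $\tau_M := \inf\{t \ge 0 : |S_t| \ge M\}$: continuity of paths gives $\tau_M(\omega) \to \infty$ for every $\omega$, while a buy-and-hold (combined with a sell-and-hold) strategy stopped at $\pm M$ yields $\overline P(\tau_M < T) \to 0$ as $M \to \infty$. Hence it suffices to prove the bound on $\{\tau_M \ge T\}$ with constants depending on $M$ and then send $M \to \infty$.

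For the core step I would shift by $K := 2M+1$ so that $\tilde S := S + K$ lies in $[M+1, 3M+1]$ on $[0, \tau_M]$, and put $\tilde a := k 2^{-n} + K$, $\tilde b := \tilde a + 2^{-n}$. Defining Doob-type alternating hitting times $\sigma_1 < \rho_1 < \sigma_2 < \cdots$ of $\tilde a$ and $\tilde b$ (truncated at $\tau_M$) and holding $(\tilde b/\tilde a)^{i-1}/\tilde a$ units of $S$ on each interval $(\sigma_i, \rho_i]$ yields a simple strategy whose self-financing wealth, started from $V_0 = 1$, equals $V_t = (\tilde b/\tilde a)^{i-1}\,\tilde S_t/\tilde a \ge 0$ during the $i$-th holding round, so it is $1$-admissible on all of $\Omega$. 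On the event $\{\tau_M \ge T\}$, with $U := U^n_T(\omega, k 2^{-n})$, one has $V_T \ge \tfrac{1}{3}(\tilde b/\tilde a)^U \ge \tfrac{1}{3}\exp(c_M U\, 2^{-n})$ with $c_M := 1/(2(3M+1))$. A symmetric short-selling strategy, shorting $V_{\sigma_i}/(2M)$ units at $\sigma_i$ (the scaling needed so that an adverse upward move of $\tilde S$ keeps the wealth non-negative), gives the analogous bound for $D^n_T(\omega, k 2^{-n})$. Since a $1$-admissible strategy reaching wealth $\lambda$ on an event $A$ witnesses $\overline P(A) \le 1/\lambda$, this yields
\[
  \overline P\bigl(\{U^n_T(k 2^{-n}) + D^n_T(k 2^{-n}) \ge N\} \cap \{\tau_M \ge T\}\bigr) \le 6\, e^{-c_M N\, 2^{-n}}
\]
for every $k$ with $|k 2^{-n}| \le M$.

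For $|k 2^{-n}| > M$ no crossings can occur on $\{\tau_M \ge T\}$, so at most $(2M+2)\, 2^n$ levels are relevant. By countable subadditivity of $\overline P$, taking $N = C n^2 2^n$ gives
\[
  \overline P\Bigl(\bigl\{\max_{k \in \Z}(U^n_T + D^n_T) \ge C n^2 2^n\bigr\} \cap \{\tau_M \ge T\}\Bigr) \le 12(M+1)\, 2^n\, e^{-c_M C n^2},
\]
which is summable in $n \in \N$ and tends to $0$ as $C \to \infty$ by dominated convergence. Summing over $n$ by countable subadditivity of $\overline P$ and then letting $M \to \infty$ shows that the set of $\omega$ for which no finite $C_T(\omega)$ works is a null set; monotonicity of $U^n_T, D^n_T$ in $T$ reduces the statement for all $T > 0$ to the countable case $T \in \N$. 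The main obstacle is that admissibility must hold for \emph{every} $\omega \in \Omega$ and not only for typical paths, so naive pathwise upcrossing strategies fail because their wealth can become arbitrarily negative; this is circumvented by stopping at $\tau_M$, by shifting so that $\tilde S$ stays strictly positive on $[0, \tau_M]$ (which makes the compounded reinvestment strategy automatically self-collateralising), and by the $1/(2M)$-scaling of the short position in the downcrossing construction.
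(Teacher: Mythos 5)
Your argument is essentially the paper's: reinvest the compounded wealth into each successive crossing of $[k2^{-n},(k+1)2^{-n}]$ via a $1$-admissible simple strategy, so that $N$ crossings produce wealth at least of order $(1+c_M2^{-n})^{N}$, deduce an exponential bound on $\overline{P}(\{N\ge Cn^22^n\}\cap\{\text{path bounded by }M\})$, take a union bound over the $O(M2^n)$ relevant levels, and conclude by Borel--Cantelli in $n$. The paper secures admissibility by holding $V/(2K)$ shares and liquidating if $S$ hits $-K$ on $A_K=\{\sup_{[0,T]}|S|<K\}$, and it dispatches downcrossings by noting that up- and downcrossings of a fixed interval differ by at most $1$; your shift $\tilde S=S+2M+1$ combined with stopping at $\tau_M$, and your separate short-selling strategy for downcrossings, are equivalent devices and both work.

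One assertion in your localisation step is false, although the reduction it is meant to justify is still valid: $\overline{P}(\tau_M<T)$ does \emph{not} tend to $0$ as $M\to\infty$; it equals $1$ for every $M$. The sample space is all of $C([0,\infty),\R)$ with no fixed initial value, and for the constant path $\omega\equiv M+1$ (which lies in $\{\tau_M<T\}$) every simple strategy has $(H\cdot S)_t(\omega)=0$, so superhedging $\1_{\{\tau_M<T\}}$ forces $\lambda\ge 1$; for the same reason a buy-and-hold position of fixed size is not $1$-admissible on all of $\Omega$. The correct justification is the one the paper uses implicitly: every continuous path is bounded on $[0,T]$, so $\Omega=\bigcup_{M\in\N}\{\tau_M>T\}$, and once each bad set intersected with $\{\tau_M>T\}$ is shown to be null, countable subadditivity of $\overline{P}$ makes the whole bad set null. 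With that repair, and restricting to $T\in\N$ by monotonicity as you do, the proof is complete.
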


\begin{proof}  
  Let $K,T>0$. Without loss of generality we may restrict our considerations to the set $A_{K} :=\{\omega \in \Omega \, :\, \sup_{t \in [0,T]} \vert S_{t} (\omega)\vert < K\}$. Let $k \in (-2^n K, 2^n K)$ and write $u = k2^{-n}$. The following strategy will make a large profit if $U^{n}_{T}(u):=U^n_T(\omega,u)$ is large: start with wealth $1$, when $S$ first hits $u$ buy $1/(2K)$ numbers of shares. When $S$ hits $-K$ sell and stop trading. Otherwise, when $S$ hits $u+2^{-n}$ sell. This gives us wealth $1+2^{-n}/(2K)$ on the set $\{ U^{n}_{T}(u)\geq 1 \} \cap A_{K}$. Now we repeat this strategy: next time we hit $u$, we buy our current wealth times $1/(2K)$ shares of $S$, and sell when $S$ hits $u+2^{-n}$ or $-K$. After $n^{2} 2^{n}$ upcrossings of $[u, u+2^{-n}]$, stop trading. On the set $\{ U^{n}_{T}(u) \geq n^{2} 2^{n} \} \cap A_{K}$ we then have a wealth of
  \begin{equation*}
    \Big( 1+ \frac{2^{-n}}{2K} \Big)^{n^{2} 2^{n}} \ge \exp \Big(\frac{1}{4K} n^{2} \Big)
  \end{equation*}
  for all $n$ that are large enough. Therefore
  \begin{equation*}
    \bar{P} \big( \{ U^{n}_{T} ( u ) \geq n^{2} 2^{n} \} \cap A_{K} \big) \leq \exp \Big( -  \frac{n^{2}}{4K} \Big)
  \end{equation*}
  for all large $n$. Summing over all dyadic points $u= k 2^{-n}$ in $( -K,K )$, we obtain
  \[
    \overline{P} \Big( \Big\{ \max_{k \in \mathbb{Z}} U^{n}_{T} (k 2^{-n}) \geq n^{2} 2^{n} \Big\} \cap A_{K} \Big)
    \leq K2^{n+1} \exp \Big(-\frac{n^{2}}{4K} \Big) = K  \exp \Big( -  \frac{n^{2}}{8K} + (n+1) \log (2) \Big)
  \]
  for all large $n$. Since this is summable in $n$, the claimed bound for the upcrossings follows for all typical price paths. To bound the downcrossings, it suffices to note that up- and downcrossings of a given interval differ by at most 1.
\end{proof}

The following construction is partly inspired by~\cite{Morters2010}, Chapter~6.2.

\begin{theorem}\label{thm:int}
  Let $T >0$, $\alpha \in (0,1/2)$ and $(\pi^n)$ as defined in \eqref{eq:stop}. For typical price paths $\omega \in \Omega$, the discrete local time $L^{\pi^n}(S,\cdot)$ converges uniformly in $(t,u) \in [0,T] \times \R$ to a limit $L(S,\cdot) \in C([0,T], C^\alpha (\R))$, and there exists $C=C(\omega)>0$ such that 
  \begin{equation}\label{eq:indicator}
    \sup_n \Big\{ 2^{n \alpha} \vert\vert L^{\pi^{n}}(S,\cdot) -L(S,\cdot) \vert\vert_{L^{\infty} ( [ 0,T ] \times \mathbb{R} )} \Big\} \leq C.
  \end{equation}
  Moreover, for all $p>2$ we have $\sup_{n \in \mathbb{N}}\vert\vert L^{\pi^{n}} \vert\vert_{C_{T} \mathcal{V}^{p}} < \infty$ for typical price paths.
\end{theorem}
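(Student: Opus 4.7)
The plan is a Kolmogorov-Centsov style construction on the dyadic scales, whose quantitative input is a pathwise oscillation estimate derived via trading strategies analogous to those in Lemma~\ref{lem:upcrossings}.

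First, I would rewrite the discrete Tanaka identity~\eqref{eq:discrete tanaka} as
\[
   L^{\pi^n}_t(S, u) = (S_t - u)^- - (S_0 - u)^- + I^n_t(u), \qquad I^n_t(u) := \sum_{j \geq 0} \1_{(-\infty, u)}(S_{\tau^n_j})[S_{\tau^n_{j+1} \wedge t} - S_{\tau^n_j \wedge t}].
\]
The two boundary terms are $1$-Lipschitz in $u$ and have $t$-modulus controlled by that of $S$, uniformly in $n$, so the analysis reduces to $I^n_t(u)$. For neighbouring dyadic levels $u_1 = k 2^{-n}$, $u_2 = (k+1) 2^{-n}$ the difference
\[
   J^n_t(k) := I^n_t(u_2) - I^n_t(u_1) = \sum_{j} \1_{\{S_{\tau^n_j} = u_1\}}[S_{\tau^n_{j+1} \wedge t} - S_{\tau^n_j \wedge t}]
\]
is the profit of the simple strategy ``hold one share while the most recently visited dyadic point is $u_1$'', with increments of size $\pm 2^{-n}$, one per visit of $S$ to $u_1$; by Lemma~\ref{lem:upcrossings}, there are at most $C(\omega) n^2 2^n$ such visits on $[0,T]$ for typical price paths.

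The main technical step is a pathwise Azuma-type estimate: for typical price paths restricted to $A_K := \{ \sup_{t \leq T} |S_t| \leq K \}$,
\[
   \max_{|k| \leq K 2^n} \sup_{t \in [0,T]} |J^n_t(k)| \leq C(\omega, T, K) \cdot n \cdot 2^{-n/2} \quad \text{for every } n.
\]
To prove this I would mimic the geometric-compounding argument of Lemma~\ref{lem:upcrossings}: for fixed $k$ and threshold $\lambda$, iterate a sign-adaptive ``bet on the current direction of $J^n$'' strategy that scales its stake by $1 + c \lambda / K$ after every further $\lambda$-excursion of $J^n_t(k)$; after $N$ such excursions the $1$-admissible wealth exceeds $\exp(c' N \lambda / K)$, yielding exponential outer-measure estimates. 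Choosing $\lambda \asymp 2^{-n/2}$ and $N \asymp n^2$, the resulting probabilities are summable in $n$ across the $O(K 2^n)$ dyadic levels in $[-K,K]$, so Lemma~\ref{lem:na1} delivers the pathwise bound.

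Combining this with the piecewise linear structure of $u \mapsto L^{\pi^n}_t(S, u)$ (with breakpoints in $\mathbb{D}^n \cup \{S_t\}$) gives $\| L^{\pi^{n+1}} - L^{\pi^n} \|_{L^\infty([0,T] \times \R)} \leq C(\omega) \cdot n \cdot 2^{-n/2}$, and telescoping yields uniform convergence at the rate~\eqref{eq:indicator} for every $\alpha < 1/2$. The same estimate iterated across non-neighbouring dyadic points yields $\alpha$-Hölder regularity in $u$; joint continuity in $(t, u)$ follows from the uniform-in-$n$ Lipschitz continuity of $L^{\pi^n}$ in $t$ (with modulus controlled by that of $S$); and, since $L_t(S, \cdot)$ is supported in the bounded range of $S|_{[0,t]}$, $\alpha$-Hölder continuity with $\alpha \in (1/p, 1/2)$ immediately implies the uniform $p$-variation bound for any $p > 2$. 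The main obstacle is the pathwise oscillation estimate: whereas Lemma~\ref{lem:upcrossings} bets on a monotone crossing counter, here one has to superhedge a bet on a \emph{signed} running sum, requiring a sign-adaptive iterated compounding strategy whose admissibility margin and compounding factor must be tuned carefully to the $2^{-n}$ step size in order to preserve the sharp exponent $1/2$ in the Hölder scale.
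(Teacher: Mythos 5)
Your overall strategy --- reduce to the integral processes $I^{\pi^n}_t(u)$ via the discrete Tanaka identity~\eqref{eq:discrete tanaka}, control the number of nonzero increments through the crossing counts of Lemma~\ref{lem:upcrossings}, obtain exponential outer-measure bounds from admissible trading strategies, and finish with dyadic chaining --- is exactly the paper's. The concentration step you single out as the main obstacle is in fact off the shelf: the paper invokes Vovk's pathwise Hoeffding inequality (Theorem~3 in \cite{Vovk2012}, Lemma~A.1 in \cite{Perkowski2013}) for the strategies $H^{\lambda}$ and $H^{-\lambda}$ and averages them, which gives $\overline{P}(\{\sup_{t\le T}|M_t|\ge \lambda\}\cap A_K)\le 2\exp(-\lambda^2/(2\sigma^2))$ for any sum of increments $F_k\,[S_{\tau_{k+1}\wedge t}-S_{\tau_k\wedge t}]$ with $|F_k\,[S_{\tau_{k+1}\wedge t}-S_{\tau_k\wedge t}]|\le c$ and at most $N$ nonzero terms ($\sigma^2=Nc^2$); no bespoke ``sign-adaptive'' compounding is required, and indeed an adaptive bet on the ``current direction'' of a signed sum is not obviously realizable as an admissible strategy.

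The genuine gap is elsewhere: you only estimate the \emph{spatial} increments $J^n_t(k)=I^{\pi^n}_t((k+1)2^{-n})-I^{\pi^n}_t(k2^{-n})$ between neighbouring dyadics at a single level $n$, and then assert that $\|L^{\pi^{n+1}}-L^{\pi^n}\|_{L^\infty([0,T]\times\R)}\lesssim n\,2^{-n/2}$ follows by ``combining this with the piecewise linear structure''. It does not: two piecewise constant functions can each have arbitrarily small oscillation across dyadic intervals and still differ pointwise by a constant. The consecutive-level difference at a \emph{fixed} $u$ is a different martingale-type sum, namely (using $\pi^{n-1}\subset\pi^{n}$)
\begin{equation*}
  I^{\pi^{n}}_{t}(u)-I^{\pi^{n-1}}_{t}(u)=\sum_{k}\big[F^{n}_{\tau^n_k}(u)-F^{n-1}_{\tau^n_k}(u)\big]\big[S_{\tau^n_{k+1}\wedge t}-S_{\tau^n_k\wedge t}\big],
\end{equation*}
whose nonzero terms are those $k$ for which the last visited points of $\mathbb{D}^{n}$ and of $\mathbb{D}^{n-1}$ lie on opposite sides of $u$; it needs its own Hoeffding estimate (the number of such $k$ is again $O(n^2 2^n)$ by the crossing lemma, so the estimate is of the same type as your $J^n$ bound --- the fix is easy but it is a separate step, and it is the central one). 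Without it you obtain neither the convergence and rate~\eqref{eq:indicator} (your telescoping has nothing to telescope) nor the H\"older/$p$-variation bounds: to compare $I^{\pi^n}(u)$ and $I^{\pi^n}(v)$ with $2^{-m-1}<|u-v|\le 2^{-m}$ and $m<n$ one must pass through level $m$ and back, i.e.\ one needs $\|I^{\pi^n}(u)-I^{\pi^m}(u)\|_\infty$, since summing the $2^{n-m}$ neighbouring level-$n$ increments only gives $2^{n-m}\cdot n2^{-n/2}$, which is useless for small $m$. A secondary inaccuracy: $L^{\pi^n}$ is not Lipschitz in $t$ uniformly in $n$ with modulus controlled by that of $S$ (its increment over $[s,t]$ is of order $2^{-n}$ times the number of crossings of one level-$n$ interval during $[s,t]$); joint continuity of the limit instead follows from the uniform convergence in $(t,u)$, once that is actually established.
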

 
\begin{proof}
  By the identity~\eqref{eq:discrete tanaka} it suffices to prove the corresponding statements with the stochastic integrals $\int_0^t 1_{(-\infty,u)}(S_s)\dd S_s$ replacing $L_t(S,u)$. Using Lemma~\ref{lem:upcrossings}, we may fix $K>0$ and restrict our attention to the set
  \begin{equation*}
    A_{K} := \bigg\{\omega \in \Omega : \sup_{t \in [ 0,T ]} \vert S_{t} (\omega)\vert < K \text{ and }  \max_{k \in \mathbb{Z}}\, \big( U^n_T(\omega,k2^{-n}) + D_T(\omega,k2^{-n}) \big)\leq K n^2 2^n \,\, \forall \, n \bigg\}.
  \end{equation*}
  Let $u \in (-K,K)$. For every $n \in \mathbb{N}$ we approximate $\1_{(-\infty, u)}(S)$ by the process
  \begin{equation*}
    F^n_t(u) := \sum_{k=0}^\infty \1_{(-\infty, u)}(S_{\tau^n_k}) \1_{[\tau^n_k, \tau^n_{k+1})}(t), \quad t \ge 0.
  \end{equation*}
  Then we write for the corresponding integral process 
  \begin{equation*}
    I^{\pi^{n}}_{t}(u):=\sum_{k=0}^\infty \1_{(-\infty, u )}(S_{\tau_k^n}(\omega))[S_{\tau_{k+1}^n \wedge t}(\omega)-S_{\tau_k^n \wedge t}(\omega)], \quad t \ge 0,
  \end{equation*}
  and since $(\pi^n)$ is increasing, we get
  \begin{equation*}
    I^{\pi^{n}}_{t}(u) -I^{\pi^{n-1}}_{t}(u) = \sum_{k=0}^{\infty} [F_{\tau_k^n}^n(u)-F_{\tau_k^n}^{n-1}(u)][S_{\tau^n_{k+1}\wedge t}-S_{\tau^n_k \wedge t}].
  \end{equation*}
  By the construction of our stopping times $(\tau^n_k)$, we have
  \begin{equation*}
    \sup_{t\ge 0} \big\vert [F^n_{\tau_k^n}(u) - F^{n-1}_{\tau_k^n} (u)][S_{\tau^n_{k+1}\wedge t}(\omega)-S_{\tau^n_k \wedge t}(\omega)] \big\vert \leq 2^{-n+2}.
  \end{equation*}
  Hence, the pathwise Hoeffding inequality, Theorem~3 in~\cite{Vovk2012} or Lemma~A.1 in~\cite{Perkowski2013}, implies for every $\lambda \in \R$ the existence of a 1-admissible simple strategy $H^\lambda \in \mathcal{H}_1$, such that
  \begin{equation*}\label{e:karandikar integral pr1}
    1 + (H^\lambda \cdot S)_t(\omega) \ge \exp\bigg( \lambda (I^{\pi^{n}}_{t}(u) -I^{\pi^{n-1}}_{t}(u) ) - \frac{\lambda^2}{2} N^n_t(u,\omega) 2^{-2n+4}\bigg) =: \mathcal{E}^{\lambda,n}_t(\omega)
  \end{equation*}
  for all $t\in [0,T]$ and all $\omega \in \Omega$, where $N^n_t(u) := N^n_t(u,\omega)$ denotes the number of stopping times $\tau^n_k \leq t$ with $F^n_{\tau_k^n}(u) - F^{n-1}_{\tau_k^n} (u)\neq 0$. Now observe that $F^{n}_{t}$ and $F^{n-1}_{t}$ are constant on dyadic intervals of length $2^{-n}$, which means that we may suppose without loss of generality that $u= k 2^{-n}$ is a dyadic number. But we can estimate $N^{n}_{T} (k 2^{-n} )$ by the number of upcrossings of the interval $[(k -1) 2^{-n} , k 2^{-n} ]$ plus the number of the downcrossings of the interval $[k 2^{-n},(k +1) 2^{-n}]$, which means that on $A_K$ we have $N^n_T(u) \le 2 K 2^n n^2$.     
  So considering $(H^{\lambda}+H^{-\lambda})/2$ for $\lambda >0$, we get
  \begin{equation*}
    \overline{P} \bigg( \bigg\{ \sup_{t \in [ 0,T ]} | I^{\pi^{n}}_{t}(u) -I^{\pi^{n-1}}_{t}(u) | \geq 2^{-n \alpha}\bigg \} \cap A_K \bigg)
       \leq 2 \exp ( - \lambda 2^{-n \alpha} + \lambda^{2} K 2^{-n+4} n^{2})
  \end{equation*}
  for all $\lambda, \alpha>0$. Choose now $\lambda =2^{n/2}$ and $\alpha \in (0,1/2)$. Then we get the estimate
  \begin{equation*}
     \overline{P} \bigg( \bigg\{ \sup_{t \in [0,T]} \vert I^{\pi^{n}}_{t} (u) -I^{\pi^{n-1}}_{t} (u) \vert \geq 2^{-n \alpha} \bigg\} \cap A_{K}\bigg)
       \leq 2 \exp ( -2^{n ( 1/2- \alpha )} +16 K n^{2} ).
  \end{equation*}
  Moreover, noting that for all $t>0$ the maps $u \mapsto I^{\pi^{n}}_t(u)$ and $u \mapsto I^{\pi^{n-1}}_t(u)$ are constant on dyadic intervals of length $2^{-n}$ and that there are $2K2^{n}$ such intervals in $[-K,K]$, we can simply estimate
  \begin{align*}
    \overline{P}\bigg(\bigg \{ \sup_{(t,u) \in [0,T] \times \mathbb{R}} \vert I^{\pi^{n}}_{t} (u) -  I^{\pi^{n-1}}_{t} (u) & \vert \geq 2^{-n\alpha}\bigg\} \cap A_K \bigg) \\
       &\leq 2K2^{n} \times 2 \exp ( -2^{n (1/2-\alpha)} +16 K n^{2})\\
       &= \exp ( -2^{n (1/2-\alpha)} +16 Kn^{2} +(n+2)\log 2+ \log K).
  \end{align*}
  Obviously, this is summable in $n$ and thus the proof of the uniform convergence and of the speed of convergence is complete.
  
  It remains to prove the uniform bound on the $p$-variation norm of $I^{\pi^{n}}$ and the H\"older continuity of the limit. Let $p>2$ and write $\alpha =1/p$, so that $\alpha \in (0,1/2)$. First let $u = k 2^{-n} \in (-K,K)$ and write $v= (k+1) 2^{-n}$. Then
  \begin{equation*}
    I^{\pi^{n}}_{t} (v) -I^{\pi^{n}}_{t}(u) = \sum_{k=0}^{\infty} (F^{n}_{\tau^{n}_{k}}(v) -F^{n}_{\tau^{n}_{k}} ( u ) ) (S_{\tau^{n}_{k}\wedge t} -S_{\tau^{n}_{k-1}\wedge t} ) ,
  \end{equation*}
  and similarly as before we have $\sup_{t \geq 0}\vert ( F^{n}_{\tau^{n}_{k}}(v)-F^{n}_{\tau^{n}_{k}} ( u ) ) ( S_{\tau^{n}_{k}\wedge t} -S_{\tau^{n}_{k-1}\wedge t} )\vert \leq 2^{-n+1}$. On $A_K$, the number of stopping times $(\tau^n_k)_k$ with $F^{n}_{\tau^{n}_{k}}(u) \neq F^{n}_{\tau^{n}_{k}}(v)$ is bounded from above by $2K 2^n n^2 +1$, and therefore we can estimate as before
  \begin{equation*}
    \overline{P}\bigg(\bigg\{\sup_{t \in [0,T]} \sup_{u,v \in \R : \vert u-v \vert \leq 2^{-n}} \vert I^{\pi^{n}}_{t}(v) - I^{\pi^{n}}_{t}(u) \vert \geq 2^{-n \alpha}\bigg \} \cap A_K \bigg)\leq \exp (-2^{n (1/2- \alpha)} +C  n^{2}),
  \end{equation*}
  for some appropriate constant $C=C(K)>0$.
  
  We conclude that for typical price paths $\omega \in \Omega$ there exists $C = C(\omega) >0$ such that
  \begin{equation*}
    \sup_{t \in [0,T]} \sup_{\vert u-v \vert \leq 2^{-n}} \vert I^{\pi^{n}}_{t} (v) -I^{\pi^{n}}_{t} (u) \vert + \sup_{t \in [0,T]} \sup_{u \in \R} \vert I^{\pi^{n}}_{t} (u)-I^{\pi^{n-1}}_{t} (u) \vert\leq C 2^{-n \alpha}
  \end{equation*}
  for all $n \in \N$. Let now $n \in \mathbb{N}$ and let $u,v \in \R$ with $1 \geq \vert u-v\vert \geq 2^{-n}$. Let $m \leq n$ be such that $2^{-m-1} < \vert u-v \vert \leq 2^{-m}$. Then
  \begin{align*} 
    \vert\vert I^{\pi^{n}} (v) - I^{\pi^{n}} (u)& \vert\vert_{\infty} \\
    &\leq \vert\vert I^{\pi^{n}} (v) -I^{\pi^{m}} (v) \vert\vert_{\infty} +\vert\vert I^{\pi^{m}}(v) -I^{\pi^{m}}(u) \vert\vert_{\infty} + \vert\vert I^{\pi^{m}}(u) -I^{\pi^{n}}(u) \vert\vert_{\infty} \\
    &\leq C \left(\sum_{k=m+1}^{n} 2^{-k \alpha} +2^{-m \alpha} + \sum_{k=m+1}^{n} 2^{-k \alpha} \right)
    \leq C 2^{-m \alpha} \leq C \vert v-u \vert^{\alpha},
  \end{align*}
  possibly adapting the value of $C>0$ in every step. Since $I^{\pi^{n}}_{t}$ is constant on dyadic intervals of length $2^{-n}$, this proves that $\sup_{t \in [ 0,T ]}\vert\vert I^{\pi^{n}}_{t} \vert\vert_{p\mhyphen\var} \le C$. The $\alpha$-H\"older continuity of the limit is shown in the same way.
\end{proof}

We reduced the problem of constructing $L$ to the problem of constructing certain integrals. In~\cite{Perkowski2013}, Corollary~3.6, we gave a general pathwise construction of stochastic integrals. But this result does not apply here, because in general $\1_{(-\infty, u)}(S)$ is not c\`adl\`ag.

\begin{remark}
  Theorem~\ref{thm:int} gives a simple, model free proof that local times exist and have nice properties. Let us stress again that by Proposition~\ref{prop:prop}, all the statements of Theorem~\ref{thm:int} hold quasi-surely for all probability measures on $(\Omega, \F)$ under which $S$ satisfies (NA1).
   
  Below, we sketch an alternative proof based on Vovk's pathwise Dambis Dubins-Schwarz theorem. While we are interested in a statement for typical price paths, which a priori is stronger than a quasi-sure result for all measures satisfying (NA1), the quasi-sure statement may also be obtained by observing that every process satisfying (NA1) admits a dominating local martingale measure, see~\cite{Ruf2013, Imkeller2015}. Under the local martingale measure we can then perform a time change to turn the coordinate process into a Brownian motion, and then we can invoke standard results for Brownian motion for which all statements of Theorem~\ref{thm:int} except one are well known: The only result we could not find in the literature is the uniform boundedness in $p$-variation of the discrete local times.
\end{remark}

\begin{remark}
  Note that for $u=k 2^{-n}$ with $k \in \Z$ we have $L^{\pi^n}_t(u)=2^{-n} D_t(u-2^{-n}, u) + \varepsilon(n,t,u)$ for some $\varepsilon(n,t,u) \in [0,2^{-n}]$. Therefore, our proof also shows that the renormalized downcrossings converge uniformly to the local time, with speed at least $2^{-n\alpha}$ for $\alpha<1/2$. For the Brownian motion this is well known, see~\cite{Chacon1981}; see also~\cite{Khoshnevisan1994} for the exact speed of convergence. In the Brownian case, we actually know more: Outside of one fixed null set we have
  \[
    \lim_{\varepsilon \to 0} \sup_{x \in \R} \sup_{t \in [0,T]} | \varepsilon^{-1} D_t(x, x+\varepsilon) - L_t(x)| = 0
  \]
  for all $T>0$. It should be possible to recover this result also in our setting. It follows from pathwise estimates once we prove Theorem~\ref{thm:int} for a sequence of partitions $(\widetilde \pi^n)$ of the following type: Let $(c_n)$ be a sequence of strictly positive numbers converging to 0, such that $c_{n+1}/c_n$ converges to 1. Define $\widetilde {\mathbb D}^n := \{ k c_n \, : \, k \in \Z\}$. Now define $\widetilde \pi^n$ as $\pi^n$, replacing $\mathbb D^n$ by $\widetilde {\mathbb D}^n$. The only problem is that then we cannot expect the sequence $(\widetilde \pi^n)$ to be increasing, and this would complicate the presentation, which is why we prefer to work with the dyadic Lebesgue partition.
\end{remark}

Finally, we want to indicate that Theorem \ref{thm:int} could also be partially proven by relying on the pathwise Dambis Dubins-Schwarz type theorem of Vovk~\cite{Vovk2012}, which allows to transfer properties of the one-dimensional Wiener process to typical price paths.\\
As mentioned above, Vovk's outer measure $\overline{Q}$ is defined slightly differently than $\overline{P}$ but all results which hold true outside of a $\overline{Q}$-null set are also true outside of a $\overline{P}$-null set; see Section 2.4 of~\cite{Perkowski2013}. To understand Vovk's pathwise Dambis Dubins-Schwarz theorem, we need to recall the definition of time-super\-invariant sets.
\begin{definition}
  A continuous non-decreasing function $f \colon [0, \infty) \to [0,\infty)$ satisfying $f(0)=0$ is said to be a \textit{time change}. A subset $A \subseteq \Omega$ is called 
  \textit{time-superinvariant} if for each $\omega \in \Omega$ and each time change $f$ it is true that $\omega\circ f \in A$ implies $\omega \in A$.
\end{definition}
Roughly speaking, Vovk proved in Theorem 3.1 of~\cite{Vovk2012} that the Wiener measure of a time-super\-invariant set equals the outer measure $\overline{Q}$ of this set. It turns out that the sets
\begin{align*}
  A_c &:= \{ \omega \in \Omega \, : \,  S(\omega) \in \L_c \}\quad \text{and} \\
  A_{c,p} &:= \{ \omega \in A_c \, : \, u \mapsto L_t(S,u)(\omega) \text{ has finite $p$-variation for all } t \in [0,\infty) \}
\end{align*}
are time-super\-invariant. Based on this, one can rely on classical results for the Wiener process (see~\cite{Karatzas1988}, Theorem~3.6.11 or~\cite{Morters2010}, Theorem~6.19) to show that typical price paths have an absolutely continuous occupation measure $L_t(S,u)$  with jointly continuous density and that $L_t(S,\cdot)$ has finite $p$-variation which is uniformly bounded in $t \in [0,T]$ for all $T > 0$ and all $p>2$ (see~\cite{Morters2010}, Theorem~6.19).

However, to the best of our knowledge the alternative approach does not give us the uniform boundedness in $p$-variation of the approximating sequence $(L^{\pi^n})$: we were not able to find such a result in the literature on Brownian motion. Without this, we would only be able to prove an abstract version of Theorem~\ref{thm:gito2}, where the pathwise stochastic integral $\int_0^t g(S_s) \dd S_s$ is defined by approximating $g$ with smooth functions for which the F\"ollmer-It\^o formula Theorem~\ref{thm:ito} holds (see~\cite{Feng2006} for similar arguments in a semimartingale context). Since we are interested in the Riemann sum interpretation of the pathwise integral, we need Theorem~\ref{thm:int} to make sure that all requirements of Theorem~\ref{thm:gito2} are satisfied for typical price paths.


\end{document}